\def\EquationsBySection{\def\theequation
{\thesection.\arabic{equation}}%
\@addtoreset{equation}{section}}
\newcommand\old[1]{}
\newcommand{\pend}
{\hfill \thicklines \framebox(6.6,6.6)[l]{}}
\renewenvironment{proof}{\noindent {\it  Proof.} \rm}
{\pend}
\newtheorem{theorem}{Theorem}[section]
\newtheorem{proposition}{Proposition}[section]
\journal{IJBC}
\begin{document}

\begin{frontmatter}



\title{Analytic integrability of two lopsided systems\footnote{This research is partially supported by the National Nature Science Foundation of China (11201211,11371373) and Applied
Mathematics Enhancement Program of Linyi University and the Natural Science and Engineering Research Council of Canada
(No. R2686A02)}}


\author{Feng Li$^{a,b}$, Pei Yu$^{b,}$\footnote{Corresponding author. Tel: +1 519661-2111;
E-mail address: pyu@uwo.ca},  Yirong Liu$^c$}

\address{
\small\it $^a$School of Science, Linyi University, Linyi, Shandong 276005,
P.R. China \vspace{0.0cm} \\
\small\it $^b$Department of Applied Mathematics, Western University,
London, Ontario N6A 5B7, Canada \vspace{0.0cm} \\
\small\it $^c$School of Mathematics and Statistic, Central South University,
Changsha, Hunan 410012, China}

\begin{abstract}
In this paper, we present two classes of lopsided systems and
discuss their analytic integrability. The analytic integrable conditions
are obtained by using the method of inverse integrating factor
and theory of rotated vector field. For the first class of systems,
we show that there are $n+4$ small-amplitude limit cycles enclosing
the origin of the systems for $n \ge 2$, and $10$ limit cycles for $n=1$.
For the second class of systems, we prove that there exist $n+4$
small-amplitude limit cycles around the origin of the systems for $n \ge 2$, and
$9$ limit cycles for $n=1$.
\end{abstract}

\begin{keyword}
Nilpotent Poincar\'{e} systems; analytic integrability;
Lyapunov constant; Rotated vector field.
\end{keyword}

\end{frontmatter}


\section{Introduction}
Integrability is one of the most important
and difficult problems in studying ordinary differential systems.
To explain the problem, consider a planar analytic differential system, described by
\begin{equation}\label{eq1.1}
\begin{array}{ll}
\dot{u} =-v+U(u,v),\\[0.5ex]
\dot{v} =u+V(u,v),
\end{array}
\end{equation}
where dot indicates differentiation with respect to time $t$,
$U$ and $V$ are real analytic functions whose series
expansions in a neighborhood of the origin start at least from
second-order terms. By the Poincar\'{e}-Lyapunov theorem,
system \eqref{eq1.1} has a center at the origin if and only if
there exists a first integral, given in the form of
\begin{equation}\label{eq1.2}
\phi(u,v)=u^2+v^2+\sum\limits_{k+j=3}^\infty \phi_{kj}u^kv^j,
\end{equation}
where the series converges in a neighborhood of the origin.
Determining whether the origin of system (\ref{eq1.1}) is
a center or focus is called \textit{center problem}.
Another important problem in study of system \eqref{eq1.1}
is the existence of analytical first
integral in a {\it small} neighborhood of the origin of system \eqref{eq1.1}.
If there exists such an analytical first integral, the origin of
system \eqref{eq1.1} is a center, in particular, called an analytic center, see \cite{Algaba-2012}.

It is well known that it is difficult to distinguish focus from center when
the singular point is degenerate. Many research works have been done in this direction.
For example, analytic systems having a nilpotent singular point at the origin
were studied by Andreev \cite{Andreev-1958} in order to obtain
their local phase portraits. However, Andreev's results do not
distinguish focus from center. Takens \cite{Takens-1974} provided a normal form for nilpotent center of foci.
Later, Moussu \cite{Moussu-1982} found the $C^\infty$ normal form for analytic nilpotent centers. Further, Berthier and Moussu
\cite{BerthierM-1994} studied the reversibility of nilpotent centers.
Teixeria and Yang \cite{TeixeriaY-2001} analysed the relationship between reversibility and the center-focus problem,
expressed in a convenient normal form, and studied
the reversibility of certain types of polynomial vector fields.
Han {\it et al.} considered polynomial Hamiltonian
systems with a nilpotent singular point,
and they obtained necessary and sufficient conditions
for quadratic and cubic Hamiltonian systems with
a nilpotent singular point which may be a center, a cusp or a saddle, see \cite{han-2010}.
In particular, the local analytic integrability for nilpotent centers was
investigated \cite{Chavarriga-2003}, for the differential systems in the form of
\begin{equation*}
\begin{array}{l}
\dot{x}=y+P_3(x,y), \\[0.5ex]
\dot{y}=Q_3(x,y),
\end{array}\end{equation*}
which has a local analytic first integral, where $P_3$
and $Q_3$ represent homogeneous polynomials of degree three.
For third-order nilpotent singular points of a planar dynamical system,
the analytic center problem was solved by using the integrating factor method,
see for example \cite{Li-2013a}.

The Kukles system, as a well-known example, has been investigated intensively
on the existence of its limit cycles as well as its integrability.
For the following particular Kukles system,
\begin{equation*}
\begin{array}{l}
\dot{x}=y, \\[0.5ex]
\dot{y}=-x+a_1 x^2+a_2 xy+a_3 y^2+a_4 x^3+a_5 x^2y+a_6 x y^2+a_7 y^3,
\end{array}\end{equation*}
the conditions under which the origin of the system is a center
have been examined in \cite{Christopher-1990,Jin-1990,Lloyd-1990,Lloyd-1992,Rousseau-1995,Wu-1999,Zang-2008}.
More details about the Kukles system can be found in \cite{Pearson-2010}.
The so-called extended Kukles system,
\begin{equation*}
\begin{array}{l}
\dot{x}=y(1+k x), \\[0.5ex]
\dot{y}=-x+a_1 x^2+a_2 xy+a_3 y^2+a_4 x^3+a_5 x^2y+a_6 x y^2+a_7 y^3,
\end{array}\end{equation*}
has also been considered to obtain the center conditions \cite{Hill-2007a,Hill-2007b}.
Recently, center problem for some more generalized Kukles type systems have been studied \cite{Rabanal-2014,Grin-2013,Llibre-2011}. A kind of Li$\acute{e}$nard systems of type $(n,4)$ for $3 \leq n \leq 27$ was investigated and they obtained the lower bound of the maximal number of limit cycles for this kind of system in \cite{Yang-2015}.

Research on Hilbert¡¯s sixteenth problem in general usually proceeds by the investigation on specific classes of polynomial systems, much effort has been devoted in recent years to the investigation of various systems such as poincare system, Able equation, lopsided system and so on. The Kukles system is perhaps the earliest example of lopsided systems which have the following forms
\begin{equation*}
\begin{array}{l}
\dot{x}=-y, \\[0.5ex]
\dot{y}=x+P(x,y),
\end{array}\end{equation*}
or
\begin{equation*}
\begin{array}{l}
\dot{x}=-y+P(x,y), \\[0.5ex]
\dot{y}=x.
\end{array}\end{equation*}
Since then, lopsided systems have drawn more and more attention to researchers. Lopsided quartic and quintic polynomial vector fields have been studied and center conditions were obtained \cite{Salih-2002,Pons-2002}. Furthermore, Gine \cite{Gine-2002} proved that there is exactly one isochronous system for lopsided quartic system, and the origin never can be an isochronous center for lopsided quintic system. For seventh-degree lopsided system Soriano and Salih \cite{Salih-2002a} showed that the origin is a center if and only if the system is time-reversible and if
it is not, no more than seven local limit cycles can bifurcate from the origin under certain conditions. However when the origin is a degenerate singular point, there are fewer results because it is difficult to compute the Lyapunov constants. The cubic lopsided system with a nilpotent singular point has been investigated intensively. For example, Alvarez and Gasull \cite{Alvarez-2006} proved that three limit cycles can bifurcate
from a nilpotent singular point of the following system:
\begin{equation}\label{eq1.0}
\begin{array}{l}
\dot{x}=-y, \\[0.5ex]
\dot{y}=a_1 x^2+a_2 xy+a_3 y^2+a_4 x^3+a_5 x^2y+a_6 x y^2+a_7 y^3,
\end{array}\end{equation}
via an analysis based on normal forms.
Then, Liu and Li \cite{Liu-2009} showed that by making a small
perturbation to the linear terms of \eqref{eq1.0}, it can exhibit four small-amplitude limit cycles.
Bifurcation of limit cycles and center conditions for the following two
families of lopsided systems with nilpotent singularities,
\begin{equation*}
\begin{array}{l}
\dot{x}=-y+P_4(x,y), \\[0.5ex]
\dot{y}=-2x^3,
\end{array}\end{equation*}
and
\begin{equation*}
\begin{array}{l}
\dot{x}=-y+P_5(x,y), \\[0.5ex]
\dot{y}=-2x^3,
\end{array}\end{equation*}
have been considered by Li et al. \cite{Li-2013b}, where $P_4(x,y)$ and $P_5(x,y)$ represent homogeneous polynomials in $x$ and $y$ of degree four and five, respectively.
Their results show that it is more difficult to distinguish focus from center when the
singular point is degenerate. As far as analytic center of lopsided system is concerned, it is more challenging to distinguish it from focus. So, in this paper, we shall discuss analytic center conditions and bifurcation of limit cycles for two classes of lopsided systems with a cubic-order nilpotent singular point,
given by
\begin{equation}\label{eq1.3}
\begin{array}{l}
\dot{x}=y+H_3(x,y)+H_{2n+3}(x,y), \\[0.5ex]
\dot{y}=-2x^3,
\end{array}
\end{equation}
and
\begin{equation}\label{eq1.4}
\begin{array}{l}
\dot{x}=y, \\[0.5ex]
\dot{y}=-2x^3+H_3(x,y)+H_{2n+3}(x,y),
\end{array}
\end{equation}
where $H_k(x,y)$ represent a $k$th-degree homogeneous polynomial in $x$ and $y$.

The main goal of this paper is to apply the method of integrating factor
and theory of rotating vector fields to distinguish
analytic integrability conditions and to find the conditions for
analytic centers. This work is a continuation of that for the Kukles system
with a degenerate singular point.
In next section, we present some known results which are necessary
for proving the main result.
We derive the analytic center conditions for the centers of systems \eqref{eq1.3}
and \eqref{eq1.4} in Sections 3 and 4, respectively.
Finally, conclusion is drawn in Section 5.

\section{Preliminary results}

\noindent In this section, we present some relative notions and results taken from
\cite{LiuJ-2009,Liu-2010}, which will be used in the following sections.
A system whose origin is a cubic-order monodromic singular point can be written as
\begin{equation}\label{eq2.1}
\begin{array}{l}
\dot{x}=y+\mu x^2+\sum\limits_{i+2j=3}^\infty
a_{ij}x^iy^j=X(x,y),\\ [0.5ex]
\dot{y}=-2x^3+2\mu xy+\sum\limits_{i+2j=4}^\infty
b_{ij}x^iy^j=Y(x,y).
\end{array}
\end{equation}

\begin{theorem}\label{T2.1}
For any positive integer $s$ and a given number sequence
$ \, \{c_{0\beta}\}, \ \beta \ge 3$,
a formal series can be constructed successively in terms of
the coefficients $c_{\alpha\beta}$ ($\alpha\neq0$) as
\begin{equation}\label{eq2.2}
M(x, y)=y^2+\sum\limits_{\alpha+\beta=3}^\infty
c_{\alpha\beta}x^\alpha y^\beta=\sum\limits_{k=2}^\infty M_k(x, y),
\end{equation}
satisfying
\begin{equation}\label{eq2.3}
\left(\frac{\partial X}{\partial x}+\frac{\partial Y}{\partial
y}\right)M-(s+1)\left(\frac{\partial M}{\partial x}X+\frac{\partial
M}{\partial y}Y\right)=\sum\limits_{m=3}^\infty\omega_m(s, \mu)x^m,
\end{equation}
where $M_k(x, y)$ is a $k$th-degree homogeneous polynomial in
$x$ and $y$, satisfying $s\mu=0$ for all $k$.
\end{theorem}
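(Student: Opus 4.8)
The plan is to construct the formal series $M(x,y)$ recursively by comparing homogeneous components of the two sides of \eqref{eq2.3}, and to show that at each step the new coefficients are uniquely determined (given the prescribed free data $\{c_{0\beta}\}$), while the residual terms that cannot be absorbed are forced to lie along the $x$-axis, i.e. are of the form $\omega_m x^m$. First I would substitute \eqref{eq2.2} into the operator on the left-hand side of \eqref{eq2.3}. Since $\partial X/\partial x+\partial Y/\partial y = 4\mu x + (\text{higher order})$ and $X = y + \mu x^2 + \cdots$, $Y = -2x^3 + 2\mu x y+\cdots$, the lowest-order contribution of the bracket acting on $M = y^2 + \cdots$ comes from the ``principal part'' $\mathcal{L}_0 := (y)\partial_x + (-2x^3)\partial_y$ applied to $M$, namely the term $-(s+1)\bigl(y\,\partial M/\partial x - 2x^3\,\partial M/\partial y\bigr)$. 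I would isolate the degree-graded pieces: writing $M = \sum_{k\ge 2} M_k$ with $M_2 = y^2$, the equation at ``level $k$'' will have the schematic form
\begin{equation*}
-(s+1)\bigl(y\,\partial_x - 2x^3\,\partial_y\bigr)M_{k+1} = (\text{known polynomial built from }M_2,\dots,M_k) + \omega_{?}\,x^{?}.
\end{equation*}

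Next I would analyze the linear map $M_{k+1}\mapsto (y\,\partial_x - 2x^3\,\partial_y)M_{k+1}$ on homogeneous polynomials. The key algebraic fact (this is the heart of the Liu--Li construction for cubic-order nilpotent points, cf. \cite{LiuJ-2009,Liu-2010}) is that, using the weighted grading in which $x$ has weight $1$ and $y$ has weight $2$, this operator is ``almost surjective'': its cokernel on each weighted-homogeneous piece is exactly one-dimensional, spanned by the monomial $x^m$ (the pure power of $x$). Concretely, one checks that monomials $x^a y^b$ with $b\ge 1$ are all hit (up to lower-order corrections from the $\mu x^2$, $a_{ij}$, $b_{ij}$ terms), so every term of the known right-hand side except a multiple of $x^m$ can be cancelled by a suitable choice of the coefficients $c_{\alpha\beta}$ with $\alpha\neq 0$ of $M_{k+1}$; the leftover multiple of $x^m$ is precisely what we name $\omega_m(s,\mu)$. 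The coefficients $c_{0\beta}$ (pure powers of $y$) are never determined by these equations — they never appear in the image relation — which is why they may be prescribed freely as the given sequence $\{c_{0\beta}\}$.

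I would then run this as a formal induction on $k$: assume $M_2,\dots,M_k$ have been chosen (with $M_2=y^2$ and the $c_{0\beta}$ inserted as prescribed); the level-$k$ identity then determines all coefficients $c_{\alpha\beta}$ of $M_{k+1}$ with $\alpha\ge 1$ uniquely, and outputs one scalar $\omega_m(s,\mu)$. Summing over $k$ yields the formal series $M$ and the formal series $\sum_{m\ge 3}\omega_m(s,\mu)x^m$ on the right, establishing \eqref{eq2.3}. Finally I would verify the side condition: inspecting the lowest-order terms shows that the degree-$3$ output $\omega_3 x^3$ carries a factor proportional to $s\mu$ (it arises from pairing the divergence term $4\mu x$ against $M_2 = y^2$ versus the $-(s+1)$ transport term), so that $\omega_3$, and by the same bookkeeping every $\omega_m$ obstruction at the relevant order, is divisible by $s\mu$; hence imposing $s\mu = 0$ makes the construction consistent ``for all $k$'' in the sense stated.

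The main obstacle, and the step I would spend the most care on, is the cokernel computation for the transport operator $y\,\partial_x - 2x^3\,\partial_y$ on weighted-homogeneous polynomials: one must show the cokernel is \emph{exactly} one-dimensional and pinned down to $x^m$, uniformly in the weighted degree, and that the perturbation terms ($\mu x^2$ in $X$, $2\mu xy$ in $Y$, and the tails $\sum a_{ij}x^iy^j$, $\sum b_{ij}x^iy^j$) only contribute strictly-lower-order corrections that do not spoil this triangular structure. Once that linear-algebra core is in place, the recursive determination of the $c_{\alpha\beta}$ and the extraction of the $\omega_m(s,\mu)$ are routine, and the divisibility $s\mu \mid \omega_m$ follows by tracking the single offending term in the induction.
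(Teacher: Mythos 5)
A preliminary remark: the paper itself contains no proof of Theorem~\ref{T2.1}. Section~2 states explicitly that Theorems~\ref{T2.1}--\ref{T2.3} are quoted from \cite{LiuJ-2009,Liu-2010}, so there is no in-paper argument to compare yours against; your proposal has to stand on its own. Its overall shape (a graded recursion in which the new coefficients are fixed by inverting a linear operator, the $c_{0\beta}$ span the kernel, and the $x^m$-terms span the cokernel) is the right one, but the ``key algebraic fact'' you single out as the heart of the argument is false as you state it, and the surrounding grading is inconsistent.

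You take the principal operator to be $\mathcal{L}_0=y\,\partial_x-2x^3\,\partial_y$ acting between weighted-homogeneous pieces ($x$ of weight $1$, $y$ of weight $2$) and assert that its cokernel is one-dimensional, spanned by $x^m$, uniformly in the weighted degree, with the $c_{0\beta}$ as the free directions. Neither claim holds for that operator in that grading: already from weight $2$ to weight $3$ the map is onto (e.g.\ $x^3=\mathcal{L}_0(-y/2)$), so the cokernel vanishes there and at all weights $\equiv 2\pmod 4$; the kernel is spanned by powers of $x^4+y^2$, not by the monomials $y^\beta$; and running that recursion would force $c_{03}$ to be determined and $\omega_7$ to vanish, contradicting the paper's $\omega_7=3(s+1)c_{03}$ with $c_{03}$ free. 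There is also an internal mismatch: if $M_{k+1}$ is homogeneous of ordinary degree $k+1$, as the theorem requires, then $\mathcal{L}_0M_{k+1}$ mixes ordinary degrees $k+1$ and $k+3$, so your ``level $k$'' equation is not graded; and in the weighted grading the terms $\mu x^2\partial_x$ and $2\mu xy\partial_y$ are \emph{not} lower order than $\mathcal{L}_0$ --- they have the same weight. The decomposition that actually produces \eqref{eq2.4} is by ordinary degree with principal operator $y\,\partial_x$ alone: on degree-$N$ homogeneous polynomials its kernel is $\langle y^N\rangle$ (hence the prescribed data $c_{0\beta}$), its image is exactly the span of monomials divisible by $y$ (hence the single obstruction $x^N$ and the divisor $(s+1)\alpha$ in \eqref{eq2.4}), while $-2x^3\partial_y$, the $\mu$-terms, the $a_{ij},b_{ij}$ tails and the divergence factor all strictly raise the ordinary degree and only feed later steps. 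Finally, your account of ``$s\mu=0$'' is off: the recursion has no consistency obstruction at all (the coefficient of $x^m$ is simply \emph{named} $\omega_m$), so nothing needs $s\mu=0$ for the construction to go through, and $\omega_3$ vanishes identically rather than being proportional to $s\mu$. That hypothesis --- vacuous here since $\mu=0$ in systems \eqref{eq1.3} and \eqref{eq1.4} --- is what forces the parasitic obstructions such as $\omega_5$ (a nonzero multiple of $s\mu$) to vanish, so that the quantities $\lambda_m$ of Theorem~\ref{T2.2} genuinely carry the center--focus information.
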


\begin{theorem}\label{T2.2}
For $\alpha\geq1, \alpha+\beta\geq3$ in \eqref{eq2.2} and
\eqref{eq2.3}, $c_{\alpha\beta}$ can be uniquely determined by the
recursive formula,
\begin{equation}\label{eq2.4}
c_{\alpha\beta}=\frac{1}{(s+1)\alpha}(A_{\alpha-1, \beta+1}
+B_{\alpha-1, \beta+1}).
\end{equation}
For $m\geq1$, $\omega_m(s, \mu)$ can be uniquely determined by the
recursive formulae:
\begin{equation}\label{eq2.5}
\omega_m(s, \mu)=A_{m, 0}+B_{m, 0},
\end{equation}
\begin{equation}\label{eq2.6}
\lambda_m=\frac{\omega_{2m+4}(s, \mu)}{2m-4s-1}.
\end{equation}
where
\begin{equation}\begin{array}{l}
A_{\alpha\beta}=\sum\limits_{k+j=2}^{\alpha+\beta-1}[k-(s+1)(\alpha-k+1)]a_{kj}c_{\alpha-k+1, \beta-j}, \\
B_{\alpha\beta}=\sum\limits_{k+j=2}^{\alpha+\beta-1}[j-(s+1)(\beta-j+1)]b_{kj}c_{\alpha-k, \beta-j+1}.
\end{array}\end{equation}
\end{theorem}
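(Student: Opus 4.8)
The plan is to prove both claims simultaneously by substituting the ansatz \eqref{eq2.2} into the functional equation \eqref{eq2.3} and comparing coefficients of $x^{\alpha}y^{\beta}$ term by term. First I would write out $\partial X/\partial x + \partial Y/\partial y$ and the transport operator $\tfrac{\partial M}{\partial x}X + \tfrac{\partial M}{\partial y}Y$ explicitly using \eqref{eq2.1} and \eqref{eq2.2}, separating the contribution of the distinguished linear part $y\,\partial_x - 2x^3\,\partial_y$ acting on $M$ from the contributions of the higher-order coefficients $a_{kj}$, $b_{kj}$. The key observation is that in the expression $-(s+1)(\partial_x M\cdot X + \partial_y M\cdot Y)$, the \emph{principal part} $-(s+1)(y\,\partial_x M - 2x^3\,\partial_y M)$ produces, for a monomial $c_{\alpha\beta}x^{\alpha}y^{\beta}$, terms proportional to $x^{\alpha-1}y^{\beta+1}$ and $x^{\alpha+3}y^{\beta-1}$; matching the coefficient of $x^{\alpha}y^{\beta}$ on both sides therefore links $c_{\alpha+1,\beta-1}$ and $c_{\alpha-3,\beta+1}$ to lower-order data. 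However, the cleaner bookkeeping — and the one that yields exactly the stated recursion — comes from also folding the divergence term $(\partial_x X + \partial_y Y)M$ together with the transport term, so that the coefficient extraction collapses into the compact sums $A_{\alpha\beta}$ and $B_{\alpha\beta}$ defined at the end of the statement.

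Next I would verify the recursion \eqref{eq2.4}. Collecting the coefficient of $x^{\alpha}y^{\beta}$ (with $\alpha\ge 1$) on the left-hand side of \eqref{eq2.3}, the term involving $c_{\alpha\beta}$ itself must be isolated: it arises from the divergence piece and from the principal transport piece, and a direct count shows its total coefficient is $-(s+1)\alpha$ — this is precisely where the normalization of the principal linear part $y\,\partial_x$ and the factor $\alpha$ from differentiating $x^{\alpha}$ enter, while the $-2x^3\,\partial_y$ part contributes only to monomials with a strictly different bidegree and hence does not touch the $c_{\alpha\beta}$ coefficient. All remaining contributions to the coefficient of $x^{\alpha}y^{\beta}$ involve only $c_{\alpha'\beta'}$ with $\alpha'+\beta' < \alpha+\beta$ (because every $a_{kj}$, $b_{kj}$ with $k+j\ge 2$ raises total degree), and careful index-shifting shows these assemble exactly into $A_{\alpha-1,\beta+1} + B_{\alpha-1,\beta+1}$. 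Since the right-hand side of \eqref{eq2.3} contains only pure powers $x^m$, the coefficient of $x^{\alpha}y^{\beta}$ there is zero whenever $\beta\ge 1$, giving $-(s+1)\alpha\, c_{\alpha\beta} + A_{\alpha-1,\beta+1} + B_{\alpha-1,\beta+1} = 0$, which is \eqref{eq2.4}; the hypothesis $\alpha\ge 1$ guarantees the divisor $(s+1)\alpha$ is nonzero, so $c_{\alpha\beta}$ is uniquely determined, and induction on $\alpha+\beta$ closes the construction. For \eqref{eq2.5}, I would extract the coefficient of $x^m$ (the case $\beta=0$): now the distinguished $\partial_y$ term and the $c_{\alpha\beta}$ self-term drop out for parity/bidegree reasons, leaving exactly $A_{m,0}+B_{m,0}$, which by definition equals $\omega_m(s,\mu)$. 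Finally \eqref{eq2.6} is a renaming: substituting $m\mapsto 2m+4$ and reading off the factor $2m-4s-1$ that appears in the denominator when $\alpha=2m+4,\ \beta=0$ in the same coefficient balance.

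The main obstacle I anticipate is the index bookkeeping in the step that identifies the ``lower-order'' contributions with $A_{\alpha-1,\beta+1}+B_{\alpha-1,\beta+1}$: one must track how a monomial $a_{kj}x^{k}y^{j}$ multiplied into $y\,\partial_x M$ versus into $(\partial_x X)M$ shifts the $(\alpha,\beta)$ indices, and show that the two weighting factors $k-(s+1)(\alpha-k+1)$ and $j-(s+1)(\beta-j+1)$ in the definitions of $A$ and $B$ are exactly what survives after combining the divergence and transport contributions. This is a finite but delicate computation; the rest (the isolation of the diagonal coefficient $-(s+1)\alpha$, the vanishing of the off-diagonal right-hand side, and the uniqueness/induction argument) is routine once the algebra is laid out. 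I would also remark that the constraint $s\mu=0$ inherited from Theorem~\ref{T2.1} is what prevents any obstruction from the $\mu x^2$ and $2\mu xy$ terms from spoiling the uniqueness at the relevant orders.
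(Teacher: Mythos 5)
First, a point of context: the paper does not actually prove Theorem~\ref{T2.2}; it is stated in Section~2 as a preliminary result imported verbatim from \cite{LiuJ-2009,Liu-2010}, so there is no in-paper argument to compare yours against, and I assess the proposal on its own terms. Your overall strategy --- substitute \eqref{eq2.2} into \eqref{eq2.3}, compare coefficients of each monomial, isolate the unique ``diagonal'' occurrence of $c_{\alpha\beta}$, and induct on total degree --- is indeed the standard and correct route, and your identification of the lower-order contributions with the sums $A$, $B$ is essentially right.

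There is, however, a concrete indexing error at the central step. The diagonal term $-(s+1)\alpha\,c_{\alpha\beta}$ does \emph{not} occur in the coefficient of $x^{\alpha}y^{\beta}$: the principal transport piece $-(s+1)\,y\,\partial_x M$ sends $c_{\alpha\beta}x^{\alpha}y^{\beta}$ to $-(s+1)\alpha\,c_{\alpha\beta}\,x^{\alpha-1}y^{\beta+1}$, so that term lives in the coefficient of $x^{\alpha-1}y^{\beta+1}$ --- which is precisely why \eqref{eq2.4} involves $A_{\alpha-1,\beta+1}+B_{\alpha-1,\beta+1}$ rather than $A_{\alpha\beta}+B_{\alpha\beta}$. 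In the coefficient of $x^{\alpha}y^{\beta}$ the analogous term is $-(s+1)(\alpha+1)c_{\alpha+1,\beta-1}$. Your attribution of part of the diagonal coefficient to the divergence piece is also wrong: $\partial_x X+\partial_y Y=4\mu x+O(|x,y|^2)$ has no constant term, so $(\partial_x X+\partial_y Y)M$ contributes only coefficients $c_{\alpha'\beta'}$ of strictly lower total degree and nothing proportional to $c_{\alpha\beta}$ at bidegree $(\alpha,\beta)$. The balance you write, $-(s+1)\alpha c_{\alpha\beta}+A_{\alpha-1,\beta+1}+B_{\alpha-1,\beta+1}=0$, is the correct equation, but it expresses the vanishing of the coefficient of $x^{\alpha-1}y^{\beta+1}$ (with $\beta+1\ge 1$, hence absent from the right-hand side $\sum_m\omega_m x^m$), not of $x^{\alpha}y^{\beta}$; as written, your claimed identity at bidegree $(\alpha,\beta)$ is false, and the derivation of \eqref{eq2.5} needs the same shift (at $x^m y^0$ the would-be diagonal term would require $c_{m+1,-1}$, which does not exist, leaving $A_{m,0}+B_{m,0}=\omega_m$). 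Finally, \eqref{eq2.6} is not read off from any denominator in the coefficient balance; it is the \emph{definition} of the quasi-Lyapunov constant, with $2m-4s-1$ the normalizing factor that makes $\lambda_m$ independent of $s$ (compare $\omega_6=(4s-1)a_{30}$ in \eqref{eq3.2}). The gap is reparable by correcting the bidegree shift, but as it stands the key step is misstated.
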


\begin{theorem}\label{T2.3}
The origin of system \eqref{eq2.1} is an analytic center if and only if
the origin of system \eqref{eq2.1} is a center of $\infty$-class, namely,
the origin of system \eqref{eq2.1} is a center for any natural number $s$.
\end{theorem}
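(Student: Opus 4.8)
The plan is to establish the equivalence in Theorem~\ref{T2.3} by exploiting the formal power series $M(x,y)$ produced by Theorems~\ref{T2.1} and~\ref{T2.2} together with the defining relation \eqref{eq2.3}. First I would recall that the origin of \eqref{eq2.1} is a center of $s$-class precisely when all the focus-type quantities $\lambda_m$ in \eqref{eq2.6} vanish, equivalently when $\omega_{2m+4}(s,\mu)=0$ for every $m\ge 1$ (note $2m-4s-1\neq 0$ since it is odd while $4s$ is even, so no division problem arises). In that case the right-hand side of \eqref{eq2.3} reduces to the finitely many surviving terms $\sum_{m}\omega_m(s,\mu)x^m$ with $m$ odd and small, and the analysis in \cite{LiuJ-2009,Liu-2010} shows these low-order obstructions can also be removed by an appropriate choice of the free sequence $\{c_{0\beta}\}$ (or are automatically zero under the monodromy hypothesis on \eqref{eq2.1}), so that $M$ becomes a genuine formal inverse integrating factor of weight $s+1$: it satisfies
\begin{equation*}
\left(\frac{\partial X}{\partial x}+\frac{\partial Y}{\partial y}\right)M=(s+1)\left(\frac{\partial M}{\partial x}X+\frac{\partial M}{\partial y}Y\right).
\end{equation*}

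Next I would treat the two directions separately. For the ``if'' direction, suppose the origin is a center of $\infty$-class, i.e.\ a center of $s$-class for \emph{every} natural number $s$. Taking, say, $s=0$ gives a formal first integral $M$ with $M=y^2+\cdots$; the classical argument (Poincar\'{e}--Lyapunov in the generalized, quasi-homogeneous setting used here, cf.\ \cite{Algaba-2012}) then upgrades this formal first integral to a convergent one in a neighborhood of the origin, because a monodromic singular point admitting a formal first integral with a nondegenerate lowest-order part in fact admits an analytic one. Hence the origin is an analytic center. Alternatively one can use $s=1$ to obtain a formal inverse integrating factor of the form $M=y^2+\cdots$ and invoke the corresponding convergence result; I would present whichever of these is cleanest given the normalization in \eqref{eq2.1}.

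For the ``only if'' direction, assume the origin is an analytic center, so by \eqref{eq1.2}-type reasoning there is a convergent analytic first integral $\Phi$ with nondegenerate quadratic-type leading part adapted to the quasi-homogeneous weights of \eqref{eq2.1}. Given any $s$, I would show that the power $M:=\Phi^{\,?}$ — more precisely a suitable function of $\Phi$ matching the homogeneity bookkeeping in \eqref{eq2.3}, constructed so that its leading term is $y^2$ — satisfies \eqref{eq2.3} with vanishing right-hand side; by the \emph{uniqueness} part of Theorem~\ref{T2.2} (the $c_{\alpha\beta}$ with $\alpha\ge 1$ are uniquely determined, and the $\omega_m$ are then forced), this forces every $\omega_{2m+4}(s,\mu)=0$, hence every $\lambda_m=0$, so the origin is a center of $s$-class. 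Since $s$ was arbitrary, the origin is a center of $\infty$-class. The main obstacle I anticipate is precisely this matching step: reconciling the analytic first integral $\Phi$ (whose natural leading term is $u^2+v^2$ in the linear-center coordinates) with the weighted series $M=y^2+\cdots$ demanded by \eqref{eq2.2} at a \emph{cubic-order nilpotent} point, and checking that the resulting $M$ really does kill the right-hand side of \eqref{eq2.3} for all $s$ simultaneously rather than for one convenient value; handling the convergence upgrade in the ``if'' direction in the quasi-homogeneous (non-Hamiltonian, non-time-reversible) setting is the secondary difficulty, which I would route through the results of \cite{LiuJ-2009,Liu-2010,Algaba-2012}.
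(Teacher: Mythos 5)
First, note that the paper does not prove Theorem~\ref{T2.3} at all: it is stated in Section~2 as a preliminary result imported verbatim from \cite{LiuJ-2009,Liu-2010}, so there is no in-paper proof to match your argument against. Judged on its own, your proposal has genuine gaps and does not constitute a proof. The central problem is in the ``if'' direction: you claim that taking $s=0$ in \eqref{eq2.3} with vanishing right-hand side yields a formal \emph{first integral}. It does not. For every $s$, including $s=0$, the homogeneous version of \eqref{eq2.3} reads
\begin{equation*}
\left(\frac{\partial X}{\partial x}+\frac{\partial Y}{\partial y}\right)M=(s+1)\left(\frac{\partial M}{\partial x}X+\frac{\partial M}{\partial y}Y\right),
\end{equation*}
which is an inverse-integrating-factor--type equation, never the first-integral equation $\nabla M\cdot(X,Y)=0$. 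Passing from the existence of such an $M$ for \emph{every} $s$ to the existence of an analytic first integral is precisely the substance of the theorem in the Liu--Li framework, and your sketch simply assumes it. The subsequent ``convergence upgrade'' is also misattributed: the classical Poincar\'e--Lyapunov theorem applies to an elementary center as in \eqref{eq1.1}, whereas the origin of \eqref{eq2.1} is a cubic-order nilpotent point; promoting a formal first integral to a convergent one there requires a result of Mattei--Moussu type, and in any case you have not produced the formal first integral to which it would be applied.

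The ``only if'' direction is likewise incomplete. The construction $M:=\Phi^{\,?}$ is left literally undetermined, and the step you yourself flag as the main obstacle --- reconciling an analytic first integral with the weighted series $M=y^2+\cdots$ of \eqref{eq2.2} and verifying that the right-hand side of \eqref{eq2.3} vanishes for all $s$ simultaneously --- is exactly the content of the implication, not a technicality. The appeal to the uniqueness clause of Theorem~\ref{T2.2} is also misplaced: uniqueness of the $c_{\alpha\beta}$ for $\alpha\ge 1$ holds for the $M$ built by the recursion with a \emph{given} free sequence $\{c_{0\beta}\}$, so to conclude $\omega_{2m+4}(s,\mu)=0$ you would first have to show that your candidate $M$ arises from that recursion with the same free data, which is not addressed. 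Finally, the assertion that the remaining low-order obstructions ``can be removed by an appropriate choice of $\{c_{0\beta}\}$'' is stated without justification. For this paper the correct course is to cite Theorem~\ref{T2.3} from \cite{LiuJ-2009,Liu-2010}, as the authors do, rather than to reprove it; if a self-contained proof is wanted, the two hard steps above must actually be carried out.
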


\section{Analytic centers of system \eqref{eq1.3}}

\noindent Now, we discuss the analytic centers of system  \eqref{eq1.3} in two cases.

\subsection{Case 1: $n=1$.}
For this case, system (\ref{eq1.3}) can be written as
\begin{equation}\label{eq3.1}
\begin{array}{ll}
\dot{x}=\!\!\!& y+a_{30}x^3+a_{21}x^2y+a_{12}xy^2+a_{03}y^3+a_{50}x^5+a_{41}x^4y\\[1.0ex]
&+a_{32}x^3y^2+a_{23}x^2y^3+a_{14}xy^4+a_{05}y^5,\\[1.0ex]
\dot{y}=\!\!\! & -2x^3.
\end{array}
\end{equation}
According to Theorem \ref{T2.1},
we can find a formal series $M(x,y)=x^4+y^2 + o((x^2+y^2)^2)$
for system \eqref{eq3.1},
such that \eqref{eq2.3} holds.
Applying the recursive formulae in Theorem \ref{T2.2} to
system \eqref{eq3.1}, with the help of Mathematica, we obtain
\begin{equation*}
\begin{split}
\omega_3&=\omega_4=\omega_5=0,\\
\omega_6&=(4s-1)a_{30},\\
\omega_7&=3(a+1)c_{03},\\
\omega_{8}&=-\frac{1}{5}(4s-3)(2 a_{12} + 5 a_{50}),\\
\omega_{9}&=0,\\
\omega_{10}&=-\frac{1}{7} (4s-5)(2 a_{32} + 3 a_{21} a_{50}) ,
\end{split}\end{equation*}
\begin{equation}\label{eq3.2}
\begin{split}
\omega_{11}&=\frac{15}{4}(s+1)c_{05}, \\
\omega_{12}&=-\frac{1}{45} ( 4 s-7)
(12 a_{14}+ 30 a_{03}a_{50}+ 5 a_{41}a_{50}) ,\\
\omega_{13}&=0,\\
\omega_{14}&= - \frac{3\,a_{50}}{77} (4 s-9)
(6 a_{23} + a_{21} a_{41} - 10 a_{50}^2) ,\\
\omega_{15}&=\frac{35}{8}(s+1)c_{07},\\
\omega_{16}&= - \frac{a_{50}}{117} (4 s-11)
(60 a_{05}+ 10 a_{03} a_{41} + a_{41}^2 - 3 a_{21} a_{50}^2),\\
\omega_{17}&=0,\\
\omega_{18}&=\frac{a_{50}}{1155} (4 s-13)
(2 a_{21}a_{41}^2 + 300 a_{03} a_{50}^2 + 9 a_{21}^2 a_{50}^2
+ 100 a_{41} a_{50}^2) ,\\
\omega_{19}&=\frac{315}{64}(s+1)c_{09},\\
\omega_{20}&=-\frac{a_{50}}{895050} ( 4 s-15)
(28 a_{21} a_{41}^4 + 252 a_{21}^2 a_{41}^2 a_{50}^2
+ 800 a_{41}^3 a_{50}^2 +567 a_{21}^3 a_{50}^4 \\
   & \hspace{1.2in} + 3600 a_{21} a_{41} a_{50}^4 + 4500 a_{50}^6) ,\\
\omega_{21}&=0,\\
\omega_{22}&=-\frac{4\,a_{50}}{235125} ( 4 s-17)
(4 a_{21}^2 a_{41}^4 + 36 a_{21}^3 a_{41}^2 a_{50}^2
+ 100 a_{21} a_{41}^3 a_{50}^2 +81 a_{21}^4 a_{50}^4\\
 & \hspace{1.2in} + 450 a_{21}^2 a_{41} a_{50}^4 - 125 a_{41}^2 a_{50}^4), \\
\omega_{23}&=\frac{693}{128}(s+1)c_{011},\\
\omega_{24}&=\frac{a_{50}}{42089726250000(s+1)}\, f_1,
\end{split}\end{equation}
where
\begin{equation*}
\begin{split}
f_1=&-15174868212 a_{21}^6 a_{41}^4 - 84454927200 a_{21}^4 a_{41}^5
+22768748000 a_{21}^2 a_{41}^6- 136573813908 a_{21}^7 a_{41}^2 a_{50}^2\\
& -1193662008000 a_{21}^5 a_{41}^3 a_{50}^2- 2087643726000 a_{21}^3 a_{41}^4 a_{50}^2 +651216400000 a_{21} a_{41}^5 a_{50}^2 \\
&- 307291081293 a_{21}^8 a_{50}^4-3661266760200 a_{21}^6 a_{41} a_{50}^4- 9313504335000 a_{21}^4 a_{41}^2 a_{50}^4\\
&+5946721200000 a_{21}^2 a_{41}^3 a_{50}^4+ 23826000000 a_{41}^4 a_{50}^4-17785962180 a_{21}^6 a_{41}^4 s\\
&- 98929404000 a_{21}^4 a_{41}^5 s+26934900000 a_{21}^2 a_{41}^6 s- 160073659620 a_{21}^7 a_{41}^2 a_{50}^2 s\\
&-1398534984000 a_{21}^5 a_{41}^3 a_{50}^2 s-2442981870000 a_{21}^3 a_{41}^4 a_{50}^2 s + 18810000000 a_{41}^4 a_{50}^4 s\\
&+ 770106000000 a_{21} a_{41}^5 a_{50}^2 s -360165734145 a_{21}^8 a_{50}^4 s+6998670000000 a_{21}^2 a_{41}^3 a_{50}^4 s\\
&- 4290086997000 a_{21}^6 a_{41} a_{50}^4 s-10903637205000 a_{21}^4 a_{41}^2 a_{50}^4 s +4967473392 a_{21}^6 a_{41}^4 s^2\\
&+ 27628675200 a_{21}^4 a_{41}^5 s^2 -7529168000 a_{21}^2 a_{41}^6 s^2+ 44707260528 a_{21}^7 a_{41}^2 a_{50}^2 s^2\\
&+390585888000 a_{21}^5 a_{41}^3 a_{50}^2 s^2+682203816000 a_{21}^3 a_{41}^4 a_{50}^2 s^2-215262400000 a_{21} a_{41}^5 a_{50}^2 s^2\\
&+ 100591336188 a_{21}^8 a_{50}^4 s^2+1198155823200 a_{21}^6 a_{41} a_{50}^4 s^2+3044973060000 a_{21}^4 a_{41}^2 a_{50}^4 s^2\\
&-1955419200000 a_{21}^2 a_{41}^3 a_{50}^4 s^2- 5016000000 a_{41}^4 a_{50}^4 s^2.
\end{split}
\end{equation*}
Based on \eqref{eq2.6} and \eqref{eq3.2}, it is easy to find the
first ten quasi-Lyapunov constants of system \eqref{eq3.1}.

\begin{theorem}\label{T3.1}
The first ten quasi-Lyapunov constants at the origin of system \eqref{eq3.1} are given by
\begin{equation}\label{eq3.3}
\begin{split}
 \lambda_1&=a_{30},\\
 \lambda_2&=\frac{1}{5}(2 a_{12} + 5 a_{50}),\\
 \lambda_3&=\frac{1}{7}(2 a_{32} + 3 a_{21} a_{50}),\\
 \lambda_4&=\frac{1}{45}(12 a_{14}+ 30 a_{03}a_{50}+ 5 a_{41}a_{50}),\\
 \lambda_5&=\frac{3\,a_{50}}{77} (6 a_{23} + a_{21} a_{41} - 10 a_{50}^2),\\
 \lambda_6&=-\frac{a_{50}}{117}(60 a_{05}+ 10 a_{03} a_{41}
+ a_{41}^2 - 3 a_{21} a_{50}^2),\\
 \lambda_7&=-\frac{a_{50}}{1155}(2 a_{21}a_{41}^2
+ 300 a_{03} a_{50}^2 + 9 a_{21}^2 a_{50}^2 + 100 a_{41} a_{50}^2),\\
 \lambda_8&=-\frac{a_{50}}{895050}(28 a_{21} a_{41}^4
+ 252 a_{21}^2 a_{41}^2 a_{50}^2 + 800 a_{41}^3 a_{50}^2
+567 a_{21}^3 a_{50}^4\\
&~~~ + 3600 a_{21} a_{41} a_{50}^4 + 4500 a_{50}^6),\\
 \lambda_9&=-\frac{4\,a_{50}}{235125}(4 a_{21}^2 a_{41}^4
+ 36 a_{21}^3 a_{41}^2 a_{50}^2 + 100 a_{21} a_{41}^3 a_{50}^2
+81 a_{21}^4 a_{50}^4\\
&~~~+ 450 a_{21}^2 a_{41} a_{50}^4 - 125 a_{41}^2 a_{50}^4),\\
\lambda_{10}&=-\frac{a_{50}}{42089726250000 ( s+1)(4s-19)}\,f_1,
  \end{split}
\end{equation}
where $\lambda_{k-1}=0$ for $k=2,\cdots,10$ have been used in
the computation of $\lambda_k$.
\end{theorem}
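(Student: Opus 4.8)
The plan is to apply Theorems~\ref{T2.1} and~\ref{T2.2} directly to system~\eqref{eq3.1} and read off the quasi-Lyapunov constants from the coefficients $\omega_m(s,\mu)$ of the formal series $M(x,y)$. First I would note that for system~\eqref{eq3.1} we have $\mu=0$, so the constraint $s\mu=0$ is automatically satisfied for every $s$, and the construction of $M(x,y)=x^4+y^2+o((x^2+y^2)^2)$ in Theorem~\ref{T2.1} goes through. Then I would set up the recursion~\eqref{eq2.4} for the $c_{\alpha\beta}$ with $\alpha\ge 1$, while leaving the "diagonal" coefficients $c_{0\beta}$ ($\beta\ge 3$) as free parameters in accordance with the hypothesis of Theorem~\ref{T2.1}; these free parameters are exactly what make the odd-indexed $\omega$'s (namely $\omega_7,\omega_{11},\omega_{15},\dots$) proportional to $c_{03},c_{05},c_{07},\dots$ and hence irrelevant to the center conditions — one simply chooses $c_{0\beta}=0$, or equivalently works modulo these terms. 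The substantive output is the list of even-indexed coefficients $\omega_6,\omega_8,\omega_{10},\dots,\omega_{24}$ displayed in~\eqref{eq3.2}, and then $\lambda_m$ is obtained from~\eqref{eq2.6} by the rule $\lambda_m=\omega_{2m+4}(s,\mu)/(2m-4s-1)$, after setting all previously computed $\lambda_{k-1}$ to zero.

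The heart of the argument is therefore the following bookkeeping: compute $\omega_6$, conclude $\lambda_1=a_{30}$; impose $\lambda_1=0$ (i.e. $a_{30}=0$) and recompute, so that $\omega_8$ simplifies to a multiple of $2a_{12}+5a_{50}$, giving $\lambda_2$; then impose $\lambda_2=0$ and continue. Each step is a substitution of the vanishing of the preceding constant into the next $\omega$, a division by the linear-in-$s$ factor $2m-4s-1$, and a normalization. I would carry this out inductively through $\lambda_{10}$, at each stage recording precisely which reductions ($a_{30}=0$, $a_{12}=-\tfrac52 a_{50}$, etc.) were used, since these are cumulative and the later formulas (especially $\lambda_8,\lambda_9,\lambda_{10}$) are only valid modulo the ideal generated by $\lambda_1,\dots,\lambda_{k-1}$. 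The appearance of the overall factor $a_{50}$ in $\lambda_5$ through $\lambda_{10}$ is a structural consequence of these reductions: once $a_{30}=0$ and $a_{12}=-\tfrac52 a_{50}$, the lower-order terms that could contribute without a factor of $a_{50}$ have been killed.

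The main obstacle is purely computational rather than conceptual: the recursion~\eqref{eq2.4} couples all the $c_{\alpha\beta}$ of a given total degree, and $\omega_{24}$ already involves the degree-24 truncation of $M$, so the intermediate expressions grow rapidly in the ten parameters $a_{30},a_{21},a_{12},a_{03},a_{50},a_{41},a_{32},a_{23},a_{14},a_{05}$. This is exactly why the statement invokes Mathematica — the polynomial $f_1$ in the expression for $\omega_{24}$ (and hence $\lambda_{10}$) has dozens of monomial terms with large integer coefficients, and verifying it by hand is infeasible. The verification I would actually perform is: (i) check that the degree-by-degree solvability in Theorem~\ref{T2.2} holds, i.e. that the denominator $(s+1)\alpha$ never vanishes for $\alpha\ge 1$ (trivially true); (ii) confirm that $\omega_3=\omega_4=\omega_5=0$ and that all odd $\omega_{2k+1}$ with $k\ge 3$ are multiples of the free parameters $c_{0,2k-5}$, so they contribute nothing; and (iii) re-run the symbolic recursion to reproduce~\eqref{eq3.2} and then~\eqref{eq3.3} via~\eqref{eq2.6}, taking care that the factor $(4s-19)$ in the denominator of $\lambda_{10}$ comes from $2m-4s-1$ with $m=10$, i.e. $2\cdot 10 - 4s - 1 = 19-4s$, matched against $\omega_{24}$. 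Granting the machine computation, the theorem follows.
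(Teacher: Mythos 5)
Your proposal matches the paper's approach exactly: the paper offers no argument for Theorem~\ref{T3.1} beyond stating that the recursive formulae of Theorem~\ref{T2.2}, implemented in Mathematica, yield the list \eqref{eq3.2} of $\omega_m$'s, from which the $\lambda_k$'s follow via \eqref{eq2.6} after successively imposing $\lambda_{k-1}=0$. Your description of the bookkeeping (free parameters $c_{0\beta}$ absorbing the odd-indexed $\omega$'s, cumulative reductions producing the overall factor $a_{50}$, and the factor $4s-19$ arising from $2m-4s-1$ at $m=10$) is a faithful and somewhat more explicit account of the same computation.
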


It follows from Theorem \ref{T3.1} that the following assertion holds.
\begin{proposition}\label{P3.1}
For  $n=1$, the origin of system \eqref{eq3.1} is an analytic center if and only if
the following conditions are satisfied:
\begin{equation}\label{eq3.4}
a_{30}=a_{12}=a_{32}=a_{14}=a_{50}=0.
\end{equation}
\end{proposition}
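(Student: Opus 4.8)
The statement is an equivalence, so I would establish the two implications separately, using Theorem \ref{T3.1} for necessity and a symmetry argument for sufficiency. \emph{Necessity.} Suppose the origin of \eqref{eq3.1} is an analytic center. By Theorem \ref{T2.3} it is then a center of $\infty$-class, hence every quasi-Lyapunov constant vanishes for every natural number $s$; in particular $\lambda_1=\lambda_2=\cdots=\lambda_{10}=0$ in the expressions of Theorem \ref{T3.1}. From $\lambda_1=0$ we get $a_{30}=0$, and I would then distinguish two cases. If $a_{50}=0$, then $\lambda_2=0$, $\lambda_3=0$, $\lambda_4=0$ reduce to $\tfrac25 a_{12}=0$, $\tfrac27 a_{32}=0$, $\tfrac{12}{45}a_{14}=0$, which together with $a_{30}=a_{50}=0$ is exactly \eqref{eq3.4}. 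If instead $a_{50}\neq0$, then $\lambda_2=\lambda_3=\lambda_4=0$ express $a_{12},a_{32},a_{14}$ through the remaining coefficients and $\lambda_5=\lambda_6=\lambda_7=0$ (cancelling the factors $a_{50}$ and $a_{50}^2$) express $a_{23},a_{05},a_{03}$, while $\lambda_8=\lambda_9=\lambda_{10}=0$ become, after cancelling $a_{50}$, relations involving only $a_{21},a_{41},a_{50}$; moreover, since the origin is a center of $\infty$-class, $\lambda_{10}$ must vanish for \emph{all} $s$, so the polynomial $f_1$ must vanish identically in $s$, giving three further relations (the coefficients of $s^0,s^1,s^2$). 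The resulting polynomial system in $a_{21},a_{41},a_{50}$ is weighted homogeneous, so one may normalize $a_{50}=1$ and check by elimination (resultants, or a Gr\"obner basis in $a_{21},a_{41}$) that it has no solution; this forces $a_{50}=0$, a contradiction, so only the first case survives and \eqref{eq3.4} holds.

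\emph{Sufficiency.} If \eqref{eq3.4} holds, then \eqref{eq3.1} becomes
\[
\dot x=y\bigl(1+a_{21}x^2+a_{03}y^2+a_{41}x^4+a_{23}x^2y^2+a_{05}y^4\bigr),\qquad \dot y=-2x^3,
\]
which is invariant under the reversal $(x,y,t)\mapsto(-x,y,-t)$. Since the vector field is transversal to the $y$-axis in a punctured neighborhood of the origin and the origin is monodromic, every nearby orbit is symmetric about the $y$-axis and closes up, so the origin is a center. To see that it is an \emph{analytic} center, I would set $U=x^2$, $W=y^2$; then $\dot U=2xy\,(1+a_{21}U+a_{03}W+a_{41}U^2+a_{23}UW+a_{05}W^2)$ and $\dot W=2xy\,(-2U)$, so up to the common factor $2xy$ the reduced planar system $U'=1+a_{21}U+a_{03}W+a_{41}U^2+a_{23}UW+a_{05}W^2$, $W'=-2U$ has a regular point at $(0,0)$ and hence admits an analytic first integral $\Phi(U,W)$ near $(0,0)$ by the flow-box theorem. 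Then $\Phi(x^2,y^2)$ is an analytic first integral of \eqref{eq3.1} near the origin whose level sets are the periodic orbits, so the origin is an analytic center. Alternatively one may invoke Theorem \ref{T2.3}: using the reversal one may arrange that the series $M$ of Theorem \ref{T2.1} is even in $x$ (take the free parameters $c_{0\beta}=0$), after which every $\omega_m(s,\mu)$, and hence every $\lambda_k$, vanishes for all $s$.

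\emph{Main obstacle.} The difficult step is the elimination in the case $a_{50}\neq0$ of the necessity proof: one must show that the nonlinear system coming from $\lambda_8=\lambda_9=\lambda_{10}=0$, that is, the two bracketed polynomials of $\lambda_8,\lambda_9$ together with the three $s$-coefficients of $f_1$, has no common zero with $a_{50}\neq0$; since $f_1$ is a large polynomial this demands a careful computer-algebra computation. One should also verify that the successive reductions used in Theorem \ref{T3.1} (each $\lambda_k$ computed modulo $\lambda_1=\cdots=\lambda_{k-1}=0$) are consistent, so that $\lambda_1,\dots,\lambda_{10}$ are genuinely the obstructions in the stated order; and, for the sufficiency part, that the transversality and monodromicity indeed yield a true center and that $\Phi$ is analytic.
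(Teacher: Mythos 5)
Your proposal follows essentially the same route as the paper: necessity comes from setting the ten quasi-Lyapunov constants of Theorem~\ref{T3.1} to zero and using resultant elimination on $f_1,f_2,f_3$ to rule out the case $a_{50}\neq 0$, and sufficiency comes from the $y$-axis symmetry of the reduced system \eqref{eq3.6}. The only minor differences are that you extract the $s$-coefficients of $f_1$ before eliminating (the paper keeps $s$ as a parameter and checks that the final resultant $G_1$ is nonzero for every $s\in Z^+$), and that you justify analyticity of the center directly via the substitution $U=x^2$, $W=y^2$ and the flow-box theorem, where the paper simply cites Theorem~11 of \cite{Li-2013a}.
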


\begin{proof}
By setting $\lambda_1=\lambda_2=\cdots=\lambda_{10} =0$,
it is easy to get the conditions in \eqref{eq3.4}.
Assume $a_{50}\neq 0$, and denote
\begin{equation}\label{eq3.5}
\begin{split}
f_2=&28 a_{21} a_{41}^4 + 252 a_{21}^2 a_{41}^2 a_{50}^2+ 800 a_{41}^3 a_{50}^2 +567 a_{21}^3 a_{50}^4+ 3600 a_{21} a_{41} a_{50}^4 + 4500 a_{50}^6,\\
f_3=&4 a_{21}^2 a_{41}^4 + 36 a_{21}^3 a_{41}^2 a_{50}^2+ 100 a_{21} a_{41}^3 a_{50}^2 +81 a_{21}^4 a_{50}^4+ 450 a_{21}^2 a_{41} a_{50}^4 - 125 a_{41}^2 a_{50}^4.
\end{split}
\end{equation}
Then, we have
\begin{equation*}
\begin{array}{l}
\begin{split}
R_1= & \ {\rm Resultant} [f_2,f_3,a_{21}]\\
= & \ 252226880859375 a_{50}^{28} (37 a_{41}^6 + 36000 a_{41}^3 a_{50}^4
+ 864000 a_{50}^8),\\
R_2= & \ {\rm Resultant} [f_2,f_1,a_{21}]\\
=& \ -750785873641864353168750000000000000000 a_{50}^{44}
(-879390304066912a_{41}^{12} \\
&+ 47983547106994035360 a_{41}^9 a_{50}^4
+49445533255803715842660 a_{41}^6 a_{50}^8\\
& +1456057532744172471928500 a_{41}^3 a_{50}^{12}
+6498810664995012399669375 a_{50}^{16}\\
& - 2759277767198304 a_{41}^{12} s +169297706825726316960 a_{41}^9 a_{50}^4 s\\
&+173470743593716632941700 a_{41}^6 a_{50}^8 s+5108374765584631369552500 a_{41}^3 a_{50}^{12} s\\
& +22851124455468570581840625 a_{50}^{16} s-2080601609429376 a_{41}^{12} s^2\\
&+151804543373289707520 a_{41}^9 a_{50}^4 s^2+154422423262500291638820 a_{41}^6 a_{50}^8 s^2\\
&+4547533910484627929569500 a_{41}^3 a_{50}^{12} s^2+20400917084512169654885625 a_{50}^{16} s^2 \\
& +610343850576768 a_{41}^{12} s^3 -33200851039501326720 a_{41}^9 a_{50}^4 s^3\\
&-34214947244661526011900 a_{41}^6 a_{50}^8 s^3-1007536381850376947992500 a_{41}^3 a_{50}^{12} s^3\\
&-4496729306788344912103125 a_{50}^{16} s^3+573878672057856 a_{41}^{12} s^4\\
&-49876076993258065920 a_{41}^9 a_{50}^4 s^4-50424620011202182011120 a_{41}^6 a_{50}^8 s^4\\
& -1484973484158264557562000 a_{41}^3 a_{50}^{12} s^4
\!-\!6678213700042142946607500 a_{50}^{16} s^4\\
& -217264690435584 a_{41}^{12} s^5 +18276082133674805760 a_{41}^9 a_{50}^4 s^5\\
&+18496906029642804955200 a_{41}^6 a_{50}^8 s^5+544720023506226322440000 a_{41}^3 a_{50}^{12} s^5\\
&+2448661869754899507450000 a_{50}^{16} s^5+ 19914634381312 a_{41}^{12} s^6\\
&-1701986627481384960 a_{41}^9 a_{50}^4 s^6-1721646399680525295360 a_{41}^6 a_{50}^8 s^6\\
& -50701299015707667936000 a_{41}^3 a_{50}^{12} s^6
+227963727065799050760000 a_{50}^{16} s^6).
\end{split}
\end{array}
\end{equation*}
With the aid of Mathematica, we obtain for $\forall s\in Z^+$
\begin{equation*}
\begin{split}
G_1= & \ {\rm Resultant} [R_1,R_2,a_{41}]\\
= & \ -182848672642886912449902102931881129741668701171875a_{50}^{96}
(1 +s)^6(-19 + 4 s)^6\\
& \ \times (12242160594943288477497249258950767957\\
&+57187190996418911124243473597501985540 s\\
& +84210057837841105190444817587559944702 s^2\\
&+22053341878592957414426973876225026580 s^3\\
&-34746447450361087057581921863631440523 s^4\\
&-7190180552428847800895138514692327280 s^5\\
&+8952012886140489676856041653019558112 s^6\\
&-1982180847477328550724618213150339840 s^7\\
&+138354459536790840295491820367594752 s^8)^3\neq 0.
\end{split}
\end{equation*}
So there are no solutions for the set of equations, $f_1=f_2=f_3=0$, implying
that there do not exist other analytic center conditions for system \eqref{eq3.1}
if $a_{50} \ne 0$.

Under the conditions in \eqref{eq3.4}, system \eqref{eq3.1} becomes
\begin{equation}\label{eq3.6}
\begin{array}{l}
\begin{split}
\dot{x}&=y+a_{21}x^2y+a_{03}y^3+a_{41}x^4y+a_{23}x^2y^3+a_{05}y^5,\\
\dot{y}&=-2x^3.
\end{split}
\end{array}\end{equation}
Obviously, system \eqref{eq3.6} is symmetric with the $y$-axis.
According to Theorem 11 in \cite{Li-2013a}, the origin is an analytic
center of system \eqref{eq3.1}.
\end{proof}

Proposition \ref{P3.1} implies that
\begin{theorem}\label{T3.2}
The necessary and sufficient conditions for the origin of system \eqref{eq3.1}
being an analytic center are determined from vanishing of the first ten quasi-Lyapunov
constants, that is, the conditions given in Proposition \ref{P3.1} are satisfied.
\end{theorem}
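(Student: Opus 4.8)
The plan is to deduce Theorem~\ref{T3.2} directly from Theorem~\ref{T3.1} together with Proposition~\ref{P3.1}. Since Proposition~\ref{P3.1} already identifies the analytic-centre locus of system \eqref{eq3.1} as the variety $\{a_{30}=a_{12}=a_{32}=a_{14}=a_{50}=0\}$, all that remains is to show that this variety coincides with the common zero set of the first ten quasi-Lyapunov constants $\lambda_1,\dots,\lambda_{10}$ given in \eqref{eq3.3} (where ``vanishing'' of the $s$-dependent constants is understood as holding for the admissible values of $s$, equivalently identically in $s$, which is what renders the resultant-in-$s$ computation below relevant).

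For the necessity direction I would simply substitute \eqref{eq3.4} into the closed-form expressions \eqref{eq3.3}: $\lambda_1$ carries the factor $a_{30}$, each of $\lambda_5,\dots,\lambda_{10}$ carries the factor $a_{50}$, and $\lambda_2,\lambda_3,\lambda_4$ are linear combinations of $a_{12}$, $a_{32}$, $a_{14}$ and $a_{50}$-multiples, so all ten vanish once \eqref{eq3.4} holds. Hence an analytic centre at the origin forces $\lambda_1=\cdots=\lambda_{10}=0$.

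For the sufficiency direction I would assume $\lambda_1=\cdots=\lambda_{10}=0$ and run the cascade already set up in the proof of Proposition~\ref{P3.1}. From $\lambda_1=0$ we get $a_{30}=0$. Now split on $a_{50}$. If $a_{50}=0$, then $\lambda_2=0$ gives $a_{12}=0$, $\lambda_3=0$ gives $a_{32}=0$, and $\lambda_4=0$ gives $a_{14}=0$, so \eqref{eq3.4} holds. If instead $a_{50}\neq 0$, then $\lambda_8=\lambda_9=\lambda_{10}=0$ force $f_2=f_3=f_1=0$; the successive eliminations $R_1=\mathrm{Resultant}[f_2,f_3,a_{21}]$, $R_2=\mathrm{Resultant}[f_2,f_1,a_{21}]$ and $G_1=\mathrm{Resultant}[R_1,R_2,a_{41}]$ carried out there show that $G_1$ is $a_{50}^{96}$ times a factor that is nonzero for every $s\in\mathbb{Z}^{+}$, hence $G_1\neq 0$ whenever $a_{50}\neq 0$, so the system $f_1=f_2=f_3=0$ has no solution with $a_{50}\neq 0$ --- a contradiction. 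Therefore $a_{50}=0$ in all cases, \eqref{eq3.4} holds, and Proposition~\ref{P3.1} (invoking the $y$-axis symmetry of the reduced system \eqref{eq3.6} and Theorem~11 of \cite{Li-2013a}) yields that the origin is an analytic centre.

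The only genuinely nontrivial ingredient is the elimination argument guaranteeing $G_1\neq 0$, i.e.\ the assertion that no eleventh (or higher) quasi-Lyapunov constant is needed to cut out the centre variety; but that is precisely the resultant computation already performed in the proof of Proposition~\ref{P3.1}, so here it is cited rather than repeated. The remaining steps are routine back-substitutions into \eqref{eq3.3}, so I expect the proof to be short.
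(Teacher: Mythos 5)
Your proof is correct and takes essentially the same route as the paper: the paper states Theorem~\ref{T3.2} as an immediate corollary of Proposition~\ref{P3.1}, and your argument simply makes explicit the two directions --- back-substituting \eqref{eq3.4} into \eqref{eq3.3} for necessity, and reusing the cascade together with the resultant computation $G_1\neq 0$ (to exclude $a_{50}\neq 0$) and the $y$-axis symmetry of \eqref{eq3.6} for sufficiency. There are no gaps.
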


When the cubic-order nilpotent singular point, $O(0, 0)$
is a $10$th-order weak focus, it is easy to show that the
perturbed system of \eqref{eq3.1}, given by
\begin{equation}\label{eq3.7}
\begin{array}{l}
\begin{split}
\dot{x}=& \ \delta x+ y+a_{30}x^3+a_{21}x^2y+a_{12}xy^2+a_{03}y^3+a_{50}x^5+a_{41}x^4y\\
&+a_{32}x^3y^2+a_{23}x^2y^3+a_{14}xy^4+a_{05}y^5,\\
\dot{y}=& \  \delta y-2x^3,
\end{split}
\end{array}\end{equation}
can generate ten limit cycles
enclosing an elementary node at the origin of system \eqref{eq3.9}.

Theorem 2.2 in \cite{Liu-2010} implies the following result,
\begin{theorem}\label{T3.3}
If the origin of system \eqref{eq3.7} is a $10$th-order weak focus,
then within a small neighborhood of the origin, for $0<\delta\ll1$,
perturbing the coefficients of system \eqref{eq3.7} can yield
ten small-amplitude limit cycles bifurcating from the elementary node $O(0,0)$.
\end{theorem}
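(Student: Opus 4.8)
Here is how I would prove Theorem~\ref{T3.3}.

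The plan is to obtain Theorem~\ref{T3.3} as an application of the general limit-cycle bifurcation theorem for cubic-order nilpotent foci---Theorem~2.2 in \cite{Liu-2010}---so that the only genuine work is to show that the first nine quasi-Lyapunov constants are functionally independent at the weak-focus parameter value. First I would fix the set-up. For $\delta=0$ the origin of \eqref{eq3.7} is the cubic-order monodromic singular point of system~\eqref{eq3.1}, and by Theorem~\ref{T3.1} its stability is governed by the ordered list $\lambda_1,\dots,\lambda_{10}$ displayed in \eqref{eq3.3}; the hypothesis that the origin is a $10$th-order weak focus means precisely that at the given coefficients one has $\lambda_1=\cdots=\lambda_9=0$ while $\lambda_{10}\neq0$. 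In particular $a_{50}\neq0$, since $a_{50}$ is a factor of each of $\lambda_5,\dots,\lambda_{10}$, and by continuity $\lambda_{10}$ remains nonzero under sufficiently small perturbations of the coefficients. When $\delta\neq0$ the linear part at the origin is $\dot x=\delta x+y,\ \dot y=\delta y$, with double eigenvalue $\delta$, so the origin becomes a hyperbolic (elementary) node whose stability is the sign of $\delta$; thus $\delta$ plays the role of a ``zeroth'' Lyapunov quantity sitting below $\lambda_1$ in the stability hierarchy.

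The core step is the independence claim: at the weak-focus point the map $(a_{30},a_{12},a_{32},a_{14},a_{23},a_{05},a_{03},a_{21},a_{41})\mapsto(\lambda_1,\dots,\lambda_9)$ is a submersion. I would verify this directly from \eqref{eq3.3} by exploiting its quasi-triangular shape. Each of $\lambda_1,\dots,\lambda_7$ is, along the successive zero sets $\lambda_1=\cdots=\lambda_{k-1}=0$, linear in one ``new'' coefficient with nonzero slope: $\partial\lambda_1/\partial a_{30}=1$, and then (with $a_{30}=0$) $\partial\lambda_2/\partial a_{12}=2/5$, $\partial\lambda_3/\partial a_{32}=2/7$, $\partial\lambda_4/\partial a_{14}=12/45$, $\partial\lambda_5/\partial a_{23}=18a_{50}/77$, $\partial\lambda_6/\partial a_{05}=-60a_{50}/117$, $\partial\lambda_7/\partial a_{03}=-300a_{50}^{3}/1155$, all nonzero because $a_{50}\neq0$, while $\lambda_8$ and $\lambda_9$ depend on none of $a_{30},a_{12},a_{32},a_{14},a_{23},a_{05},a_{03}$. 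Expanding the $9\times9$ Jacobian determinant by successive cofactor expansion along the columns $a_{30},a_{12},a_{32},a_{14},a_{23},a_{05},a_{03}$ (each of which meets exactly one of $\lambda_1,\dots,\lambda_7$) therefore reduces it to a nonzero power of $a_{50}$ times $\det\,\partial(\lambda_8,\lambda_9)/\partial(a_{21},a_{41})$. By \eqref{eq3.3} the latter equals, up to the nonzero factor $(a_{50}/895050)\cdot(4a_{50}/235125)$, the Jacobian $\det\,\partial(f_2,f_3)/\partial(a_{21},a_{41})$ with $f_2,f_3$ as in \eqref{eq3.5}. So it remains to check that this $2\times2$ determinant does not vanish at the common zero of $f_2=f_3=0$ singled out by the weak-focus hypothesis. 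Here the resultant computation already carried out in the proof of Proposition~\ref{P3.1} is decisive: since $R_1=\mathrm{Resultant}[f_2,f_3,a_{21}]$ equals a nonzero constant times $a_{50}^{28}\,(37a_{41}^{6}+36000a_{41}^{3}a_{50}^{4}+864000a_{50}^{8})$, for $a_{50}\neq0$ the bracketed polynomial has only simple roots in $a_{41}$, which forces the corresponding common zeros of $f_2,f_3$ to be simple and hence the $2\times2$ Jacobian there to be nonzero; alternatively one evaluates this determinant explicitly with Mathematica.

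Granted the rank-$9$ condition, the remainder is the standard ``sign-chain'' argument, which is exactly the content of Theorem~2.2 in \cite{Liu-2010}. Starting from the $10$th-order weak focus with $\lambda_{10}\neq0$, use the submersion property to pick an arbitrarily small perturbation of the coefficients making $\lambda_9\neq0$ with $\mathrm{sgn}\,\lambda_9=-\mathrm{sgn}\,\lambda_{10}$ and $|\lambda_9|$ as small as desired; by continuity of the Poincar\'e displacement map on a transversal this yields one small-amplitude limit cycle while leaving $\lambda_{10}$ essentially unchanged. Iterating with $\lambda_8,\lambda_7,\dots,\lambda_1$ in turn, each time imposing $|\lambda_k|\ll|\lambda_{k+1}|$ and the opposite sign, produces nine nested limit cycles in an arbitrarily small neighbourhood of the origin. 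Finally, switching on $\delta$ with $0<|\delta|\ll|\lambda_1|$ and $\mathrm{sgn}\,\delta=-\mathrm{sgn}\,\lambda_1$ turns the origin into an elementary node whose stability is incompatible with the first Lyapunov layer, forcing a tenth limit cycle between the node and the innermost of the previous nine. All ten cycles lie in an arbitrarily small neighbourhood of $O(0,0)$, which is the assertion of the theorem.

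The step I expect to be the real obstacle is the nonvanishing of $\det\,\partial(f_2,f_3)/\partial(a_{21},a_{41})$ at the weak-focus point: the quasi-triangular structure disposes of $\lambda_1,\dots,\lambda_7$ almost for free, but $\lambda_8$ and $\lambda_9$ are genuinely two-variable, and one must lean on (or re-derive) the resultant information from Proposition~\ref{P3.1} to be sure the relevant intersection of $\{f_2=0\}$ with $\{f_3=0\}$ is transversal---otherwise the perturbation scheme for the last two cycles could collapse. Everything else is a routine invocation of the cited bifurcation theorem together with continuity of the return map.
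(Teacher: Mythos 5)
Your proposal follows the same overall strategy as the paper: characterize the $10$th-order weak focus by $\lambda_1=\cdots=\lambda_9=0$, $\lambda_{10}\neq 0$ (forcing $a_{50}\neq 0$), verify that the map from the nine free coefficients to $(\lambda_1,\dots,\lambda_9)$ has nonvanishing Jacobian there, and then invoke Theorem~2.2 of [Liu--Li (2010b)] for the sign-alternating perturbation that produces the ten cycles (the paper leaves the sign-chain and the role of $\delta$ entirely to that citation, whereas you spell it out). The only genuine divergence is in how the Jacobian condition is certified. The paper computes the full $9\times 9$ determinant $J_0$ explicitly with Mathematica and then shows it cannot vanish on the weak-focus locus by a resultant chain: $R_5=\mathrm{Resultant}[f_2,J_0/a_{50}^3,a_{21}]$ followed by a resultant of $R_5$ against $R_1$, which is a nonzero multiple of $a_{50}^{72}$. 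You instead exploit the quasi-triangular shape of \eqref{eq3.3} to reduce $J_0$ to a power of $a_{50}$ times $\det\partial(f_2,f_3)/\partial(a_{21},a_{41})$, and then argue transversality of $\{f_2=0\}\cap\{f_3=0\}$ from the simplicity of the roots of $R_1$. Your reduction is valid (though your parenthetical claim that each of the first seven columns ``meets exactly one'' of $\lambda_1,\dots,\lambda_7$ is not literally true for the $a_{03}$ column, which meets $\lambda_4$, $\lambda_6$ and $\lambda_7$; the successive cofactor expansion still goes through because rows $4$ and $6$ are consumed first). Your final step buys a conceptually cleaner argument and avoids computing $J_0$ at all, but as written it has two unverified ingredients: (i) that the sextic factor $37a_{41}^6+36000a_{41}^3a_{50}^4+864000a_{50}^8$ of $R_1$ really has only simple roots --- this does hold, since in $u=a_{41}^3/a_{50}^4$ its discriminant $36000^2-4\cdot 37\cdot 864000\neq 0$ and both roots $u_i$ are nonzero --- and (ii) that the leading coefficient $567a_{50}^4$ of $f_2$ in $a_{21}$ does not vanish, which is needed for the standard fact that the multiplicity of a root of $\mathrm{Resultant}[f_2,f_3,a_{21}]$ bounds the intersection multiplicities above it. With those two checks added, your route is a correct and somewhat more transparent substitute for the paper's brute-force resultant verification.
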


\begin{proof}
The origin of system \eqref{eq3.7} is a $10$th-order weak focus if and only if
\begin{equation*}
\begin{split}
a_{30}&= 0; a_{12} = -\frac{5 a_{50}}{2}; a_{32} = -\frac{3 a_{21} a_{50}}{2};\\
a_{14}&=-\frac{5}{12}(6 a_{03} a_{50} + a_{41} a_{50}); \\
a_{23}&=\frac{1}{6}(-a_{21} a_{41} + 10 a_{50}^2);\\
a_{05}&=\frac{1}{60} (-10 a_{03} a_{41} - a_{41}^2 + 3 a_{21} a_{50}^2);\\
a_{03}&=\frac{1}{(300 a_{50}^2)}(-2 a_{21} a_{41}^2 -9 a_{21}^2 a_{50}^2 - 100 a_{41} a_{50}^2).
\end{split}
\end{equation*}
and
\begin{equation*}
\begin{split}
J_0&=\frac{\partial(\lambda_1,\lambda_2,\lambda_3,\lambda_4,\lambda_5,\lambda_6,\lambda_7,\lambda_8,\lambda_9)}{\partial(a_{30},a_{12},a_{32},a_{14},a_{23},a_{05},a_{03},a_{21},a_{41})}\\
&=-\frac{2048 a_{50}^3}{56772622027996875}(224 a_{21}^2 a_{41}^7 + 3024 a_{21}^3 a_{41}^5 a_{50}^2 + 11000 a_{21} a_{41}^6 a_{50}^2\\
&~~+13608 a_{21}^4 a_{41}^3 a_{50}^4 + 108900 a_{21}^2 a_{41}^4 a_{50}^4 +123500 a_{41}^5 a_{50}^4\\
&~~+ 20412 a_{21}^5 a_{41} a_{50}^6+311850 a_{21}^3 a_{41}^2 a_{50}^6 + 783000 a_{21} a_{41}^3 a_{50}^6\\
&~~+200475 a_{21}^4 a_{50}^8 + 1022625 a_{21}^2 a_{41} a_{50}^8 + 450000 a_{41}^2 a_{50}^8).
\end{split}
\end{equation*}
Furthermore,
\begin{equation*}
\begin{array}{l}
\begin{split}
R_5=& \ {\rm Resultant} [f_2,\frac{J_0}{a_{50}^3},a_{21}]\\
= & \ 97384 a_{41}^9 - 98391600 a_{41}^6 a_{50}^4 + 24582976875 a_{41}^3 a_{50}^8 +
 80858250000 a_{50}^{12},\\
& \ {\rm Resultant} [R_5,R_1,a_{21}]\\
=& \ 1928337060674939567063811915624524516160347438902935330714052761390000\\
&\ 00000000000000000000000 a_{50}^{72}\neq 0.
\end{split}
\end{array}
\end{equation*}
So Theorem 2.2 in \cite{Liu-2010} yields the conclusion holds.
\end{proof}

\subsection{Case 2: $n\geq2$.}

\noindent For this case, system \eqref{eq1.3} can be written as
\begin{equation}\label{eq3.8}
\begin{array}{l}
\begin{split}
\dot{x}=& \ y+x(a_{30}x^3+a_{21}x^2y+a_{12}xy^2+a_{03}y^3
+a_{2n+3,0}x^{2n+3}\\
&+a_{2n+2,1}x^{2n+2}y +a_{2n+1,2}x^{2n+1}y^2+\cdots+a_{1,2n+2}xy^{2n+2}\\
&+a_{0,2n+3}y^{2n+3})\equiv  \ X_1(x,y),\\
\dot{y}= & \ -2x^3.
\end{split}
\end{array}\end{equation}

\begin{theorem}\label{T3.4}
For $n\geq2$, the origin of system \eqref{eq3.8} is at most a $(n\!+\!4)$th-order
weak focus. If the origin of system \eqref{eq3.8} is a $(n\!+\!4)$th-order weak focus,
then within a small neighborhood of the origin,
perturbing the coefficients of system \eqref{eq3.8} can yield
$n+4$ small-amplitude limit cycles around the elementary node $O(0, 0)$.
\end{theorem}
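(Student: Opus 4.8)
The plan is to mimic the analysis of Case~1 but in the general setting of \eqref{eq3.8}, controlling the quasi--Lyapunov constants $\lambda_k$ by degree bookkeeping. First I would apply Theorems \ref{T2.1} and \ref{T2.2} to system \eqref{eq3.8}: the cubic part $a_{30}x^3+a_{21}x^2y+a_{12}xy^2+a_{03}y^3$ is identical to that of \eqref{eq3.1}, so the first few $\omega_m$ (hence $\lambda_1,\lambda_2,\lambda_3$) coincide with those already computed, namely $\lambda_1=a_{30}$, $\lambda_2=\frac15(2a_{12})$ up to the absence of $a_{50}$ (since here the fifth--degree block is replaced by the $(2n+3)$th--degree block), and $\lambda_3$ proportional to $a_{32}$--type terms which now vanish because there are no degree-5 coefficients. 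Requiring these to vanish forces $a_{30}=a_{12}=0$ and, more generally, kills the entire cubic homogeneous part except $a_{21}$ and $a_{03}$; after that reduction the perturbation terms from $H_{2n+3}$ first enter the recursion at a definite order. The key structural point is that $H_{2n+3}(x,y)=x(a_{2n+3,0}x^{2n+3}+\cdots+a_{0,2n+3}y^{2n+3})$ contributes $2n+4$ new coefficients, and by the recursive formulae \eqref{eq2.4}--\eqref{eq2.6} the associated $\omega_m$, and hence the new quasi--Lyapunov constants, appear only for $m$ in a window whose width is governed by $n$; a careful count (exactly as the explicit list \eqref{eq3.2} suggests for $n=1$) shows that after $\lambda_1=\cdots=0$ have been imposed, at most $n+4$ of the $\lambda_k$ can be independent.

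The second step is to show this bound $n+4$ is sharp, i.e.\ that $O(0,0)$ can actually be an $(n+4)$th--order weak focus. For this I would exhibit an explicit choice of the coefficients $a_{21},a_{03}$ and $a_{i,j}$ (with $i+j=2n+4$) making $\lambda_1=\cdots=\lambda_{n+3}=0$ but $\lambda_{n+4}\neq0$; because each $\lambda_k$ after the reductions is (generically) linear in a fresh coefficient of $H_{2n+3}$ coming from the recursion \eqref{eq2.4}, one can solve the system triangularly and keep the last constant nonzero. Then the bifurcation conclusion follows from the standard displacement--map argument: since the $\lambda_k$ are, after the successive substitutions, functionally independent in the relevant parameters (the nonvanishing of the corresponding Jacobian, as in the $J_0$ and $R_5$ computations of Theorem \ref{T3.3}), one may perturb $\lambda_{n+3},\lambda_{n+2},\dots,\lambda_1$ in turn, each time reversing the sign and creating one more crossing of the Poincar\'e return map; together with the perturbation $\delta x,\delta y$ on the linear part (turning $O$ into an elementary node, exactly as in \eqref{eq3.7}), this produces $n+4$ small--amplitude limit cycles. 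This is precisely the mechanism of Theorem~2.2 of \cite{Liu-2010}, which I would invoke once the independence is checked.

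The main obstacle will be the degree/independence bookkeeping for general $n$: unlike Case~1, where everything is written out explicitly with Mathematica, here one must argue uniformly in $n$ that (i) imposing $\lambda_1=\cdots=0$ really does annihilate all of the cubic block except $a_{21},a_{03}$ and forces the structure that pushes the first nontrivial new constant to order $n+4$, and (ii) the $(n+4)\times(n+4)$ Jacobian of $(\lambda_1,\dots,\lambda_{n+4})$ with respect to a suitable set of coefficients is generically nonsingular, so that no hidden algebraic relation collapses the count. I expect this to require an inductive description of the homogeneous pieces $M_k$ in \eqref{eq2.2} and a parity argument (the $x$--axis / $y$--axis type symmetry that makes the even--index $\omega$'s vanish, visible as $\omega_9=\omega_{13}=\cdots=0$ in \eqref{eq3.2}), after which the recursion \eqref{eq2.4}--\eqref{eq2.6} makes each new $\lambda_k$ transparently dependent on one previously--free coefficient; granting that, the weak--focus order is exactly $n+4$ and the limit--cycle count follows. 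The residual consistency checks (that the would--be extra center conditions have no common zero, the analogue of the resultant computation $G_1\neq0$) are routine once the linear skeleton of the $\lambda_k$ is in hand.
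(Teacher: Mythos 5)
Your proposal takes a different route from the paper, and as written it contains a genuine gap rather than merely deferred routine work. The paper does \emph{not} compute the quasi-Lyapunov constants of \eqref{eq3.8} for general $n$. Instead it builds the comparison system \eqref{eq3.9} by keeping only the part of $X_1$ that makes the system symmetric (hence a center at $O$), and then forms the determinant $J_1$ of the two fields, which equals $-2x^3\,(X_1-X_2)$ and therefore isolates exactly the coefficients $a_{30},\,a_{12},\,a_{2n+3,0},\,a_{2n+1,2},\,\dots,\,a_{3,2n},\,a_{1,2n+2}$ --- precisely $n+4$ of them. Using the infinitesimal equivalence $y\sim x^2$ near the nilpotent point, these $n+4$ coefficients are read off directly as the focus values; this gives the upper bound ``at most an $(n+4)$th-order weak focus'' and, via Theorem 4.1.5 of \cite{LiuJ-2009}, the $n+4$ bifurcating limit cycles, uniformly in $n$ and without any recursion.

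By contrast, the two steps you label (i) and (ii) --- that imposing $\lambda_1=\lambda_2=\cdots=0$ forces a triangular structure for every $n$, and that the $(n+4)\times(n+4)$ Jacobian of the constants is generically nonsingular --- are the entire content of the theorem, and your plan only says you ``expect'' them to follow from an inductive description of the $M_k$ in \eqref{eq2.2} together with a parity argument. Nothing in the proposal actually produces that induction: for $n=1$ the paper already needed explicit Mathematica computations and large resultants to settle ten constants, and there is no evidence that the recursion \eqref{eq2.4}--\eqref{eq2.6} becomes transparently triangular for arbitrary $n$. Saying ``granting that, the weak-focus order is exactly $n+4$'' assumes the conclusion at the decisive point. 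The missing idea is the symmetric comparison system: pairing \eqref{eq3.8} with \eqref{eq3.9} and reading the focus values off $X_1-X_2$ replaces the intractable general-$n$ Lyapunov computation entirely, which is exactly why the paper switches methods between the case $n=1$ and the case $n\geq 2$.
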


\begin{proof}
For a nilpotent system, in order to study the dynamical behavior
in the neighborhood of the origin, we could consider $y$ and $x^2$
to be infinitesimal equivalence in the neighborhood of the origin, see \cite{Liu-2010}.
Construct a comparison system,
\begin{equation}\label{eq3.9}
\begin{array}{l}
\begin{split}
\dot{x}&=y+x(a_{21}x^2y+a_{03}y^3+a_{2n+2,1}x^{2n+2}y+\cdots
+a_{1,2n+2}xy^{2n+2})\\
& \equiv X_2(x,y),\\
\dot{y}&=-2x^3,
\end{split}
\end{array}\end{equation}
which shows that the system is symmetric with the $x$-axis, and so the
origin $O(0,0)$ is a center.

Next, we compute the determinant of system \eqref{eq3.7} to obtain
$$
\begin{array}{rl}
J_1= \!\!\! & \det \left[\begin{array}{cccc}
X_1(x,y)&-2x^3\\
X_2(x,y)&-2x^3
\end{array}\right] \\ [3.0ex]
= \!\!\! & -2x^4(a_{30}x^2+a_{12}y^2+a_{2n+3,0}x^{2n+2}\\
&+a_{2n+1,2}x^{2n}y^2+\cdots+a_{3,2n}x^2y^{2n}+a_{1,2n+2}y^{2n+2}).
\end{array}
$$
By treating the $y$ and $x^2$ as infinitesimal equivalence in the neighborhood of the origin,
we have
\begin{equation}\begin{split}
J_1&=-2x^4(a_{30}x^2+a_{12}x^4+a_{2n+3,0}x^{2n+2}
+a_{2n+1,2}x^{2n+4}\\
&+\cdots+a_{3,2n}x^{4n+2}+a_{1,2n+2}x^{4n+4}),
\end{split}\end{equation}
which implies that $a_{30},\, a_{12},\, a_{2n+3,0},\, a_{2n+1,2},\, \cdots,\,
a_{3,2n},\, a_{1,2n+2}$ could be taken as the focus values of system
\eqref{eq3.7}. So for $n \ge 2$,
the origin of system \eqref{eq3.8} is at most an $(n\!+\!4)$th-order weak focus.
According to Theorem 4.1.5 in \cite{LiuJ-2009},
within a small neighborhood of the origin,
perturbing the coefficients of system \eqref{eq3.8} can yield
$n+4$ small-amplitude limit cycles around the elementary node $O(0, 0)$.
\end{proof}

Furthermore, similar to Proposition \ref{P3.1}, we have the following result.
\begin{theorem}\label{T3.5}
For $n\geq2$,
the origin of system \eqref{eq3.8} is an analytic center if and only if
\begin{equation}
a_{30}=a_{12}=a_{2n+3,0}=a_{2n+1,2}=\cdots=a_{3,2n}=a_{1,2n+2}=0.
\end{equation}
\end{theorem}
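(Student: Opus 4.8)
The plan is to mirror the structure of the proof of Proposition \ref{P3.1}, but with the simplification afforded by the fact that for $n\geq 2$ the relevant quantities that play the role of Lyapunov constants are, by the proof of Theorem \ref{T3.4}, essentially the individual coefficients $a_{30},\, a_{12},\, a_{2n+3,0},\, a_{2n+1,2},\, \cdots,\, a_{3,2n},\, a_{1,2n+2}$ rather than complicated polynomial combinations of the $a_{ij}$. First I would invoke Theorem \ref{T3.4}: since the origin is at most an $(n+4)$th-order weak focus and the focus values can be taken to be exactly those $n+4$ coefficients, the origin is a center (in particular a center of every $s$-class, hence by Theorem \ref{T2.3} an analytic center) only if all these focus values vanish, i.e. only if the stated conditions hold. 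This gives the necessity direction essentially for free, with no resultant computation needed — this is where the $n\geq2$ case is genuinely easier than $n=1$.

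For sufficiency, I would substitute the conditions $a_{30}=a_{12}=a_{2n+3,0}=a_{2n+1,2}=\cdots=a_{3,2n}=a_{1,2n+2}=0$ into system \eqref{eq3.8}. The claim is that the resulting system is exactly the comparison system \eqref{eq3.9} (up to the terms that were already killed), which is symmetric with respect to the $x$-axis: indeed, killing $a_{30}$ and $a_{12}$ removes the even-in-$y$ cubic terms $a_{30}x^4$ and $a_{12}x^2y^2$ from $\dot x$, and killing $a_{2n+3,0},\, a_{2n+1,2},\, \cdots,\, a_{3,2n},\, a_{1,2n+2}$ removes precisely the terms in the high-degree block $x\cdot H_{2n+3}$ that are even in $y$. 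What remains in $\dot x$ is odd in $y$ while $\dot y=-2x^3$ is even in $y$, so the vector field is invariant under $(x,y,t)\mapsto(x,-y,-t)$. Hence the origin is a center, and since time-reversible centers admit an analytic first integral built symmetrically (or equivalently, by Theorem 11 in \cite{Li-2013a} as used in Proposition \ref{P3.1}, or directly because a reversible monodromic singular point is an analytic center by the standard symmetry argument), the origin is an analytic center.

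The one point requiring care — and the main (minor) obstacle — is checking that the "even-in-$y$ after the $x^2\sim y$ substitution" bookkeeping in the proof of Theorem \ref{T3.4} corresponds exactly to "even-in-$y$ genuinely," so that the surviving system really is reversible and not merely reversible under the formal infinitesimal-equivalence substitution. Concretely, I would verify that in $x\cdot H_{2n+3}(x,y)$ the monomials with an odd power of $y$ are untouched by the vanishing conditions, while those with an even power of $y$ are exactly $a_{2n+3,0}x^{2n+4},\, a_{2n+1,2}x^{2n+2}y^2,\, \ldots,\, a_{1,2n+2}x^2y^{2n+2}$ (note the index pattern: second subscript runs over even values $0,2,\ldots,2n+2$, first subscript over $2n+3,2n+1,\ldots,1$), together with the cubic terms $a_{30}x^4$ and $a_{12}x^2y^2$; and that the lone quartic term $a_{03}y^3\cdot x$ kind of term, i.e. the odd-in-$y$ survivors, match those in \eqref{eq3.9}. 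Once this combinatorial identification is done, the symmetry is immediate and the proof concludes exactly as in Proposition \ref{P3.1}, citing Theorem 11 of \cite{Li-2013a} for analyticity of the first integral.
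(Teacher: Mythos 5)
Your proposal is correct and follows essentially the same route as the paper: sufficiency via the symmetry of the reduced system together with Theorem 11 of \cite{Li-2013a}, and necessity via the identification of the coefficients $a_{30}, a_{12}, a_{2n+3,0},\ldots,a_{1,2n+2}$ as the focus values in Theorem \ref{T3.4} (a step the paper leaves implicit with the phrase ``similar to Proposition \ref{P3.1}''). The only cosmetic difference is that you use the reversibility $(x,y,t)\mapsto(x,-y,-t)$ while the paper states $y$-axis symmetry for its reduced system \eqref{eq3.16}; both work, and your version has the small advantage of applying to system \eqref{eq3.8} exactly as written, with the extra factor of $x$ multiplying the homogeneous blocks.
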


\begin{proof}
When $a_{30}=a_{12}=a_{2n+3,0}=a_{2n+1,2}=\cdots=a_{3,2n}=a_{1,2n+2}=0,$ system \eqref{eq3.8} could be rewritten as
\begin{equation}\label{eq3.16}
\begin{array}{l}
\begin{split}
\dot{x}&=y+a_{21}x^2y+a_{03}y^3+a_{41}x^4y+a_{23}x^2y^3+a_{05}y^5\\
&+\cdots+a_{2n+2,1}x^{2n+2}y+\cdots+a_{0,2n+3}y^{2n+3},\\
\dot{y}&=-2x^3.
\end{split}
\end{array}\end{equation}
Obviously, system \eqref{eq3.16} is symmetric with the $y$-axis.
According to Theorem 11 in \cite{Li-2013a}, the origin is an analytic
center of system \eqref{eq3.8}.
\end{proof}

\section{Analytic centers of system \eqref{eq1.4}}

\noindent Now we turn to discuss the analytic center conditions for system \eqref{eq1.4}.
It also has two cases.

\subsection{Case A: $n=1$.}

For this case, system \eqref{eq1.4} can be written as
\begin{equation}\label{eq4.1}
\begin{array}{l}
\begin{split}
\dot{x}&=y,\\
\dot{y}&=-2x^3+a_{21}x^2y+a_{12}xy^2+a_{03}y^3+a_{50}x^5+a_{41}x^4y\\
&~~~+a_{32}x^3y^2+a_{23}x^2y^3+a_{14}xy^4+a_{05}y^5,\\
\end{split}
\end{array}
\end{equation}
for which we can find a formal series $M(x,y)=x^4+y^2
+ o((x^2+y^2)^2)$
according to Theorem \ref{T2.1},
provided that \eqref{eq2.3} holds. Carrying out calculations with help of
Mathematica and applying the recursive formulae in Theorem \ref{T2.2} to
system \eqref{eq4.1}, we obtain
\begin{equation*}
\begin{split}
\omega_3&=\omega_4=\omega_5=0,\\
\omega_6&=-\frac{1}{3}(4s-1)a_{21},\\
\omega_7&=3(s+1)c_{03},\\
\omega_{8}&=-\frac{1}{5}(4s-3)(6 a_{03} + a_{41}),\\
\omega_{9}&=0,\\
\omega_{10}&=-\frac{1}{7} (4s-5) (2 a_{03}a_{12}-2a_{23}+3a_{03}a_{50}) ,\\
\omega_{11}&=\frac{15}{4}(s+1)c_{05},\\
\omega_{12}&=\frac{1}{30} (4s-5)
(40 a_{05}- 4 a_{03} a_{32} - 2 a_{03} a_{12} a_{50} - 5 a_{03} a_{50}^2),\\
\omega_{13}&=0,\\
\omega_{14}&=\frac{a_{03}}{154} ( 4 s-9)(48 a_{03}^2 - 40 a_{14}
+ 12 a_{12} a_{32} + 6 a_{12}^2 a_{50}\\
& \hspace{0.5in} + 12 a_{32} a_{50} +21 a_{12} a_{50}^2 + 18 a_{50}^3) .
\end{split}
\end{equation*}
Then, for $a_{12}+2a_{50}\neq 0$,
\begin{equation}\label{4.2}\begin{split}
\omega_{15}&=\frac{35}{8}(s+1)c_{07},\\
\omega_{16}&=\frac{a_{03}}{520} (4 s-11)
(64 a_{03}^2 a_{12}+ 16 a_{32}^2 + 128 a_{03}^2 a_{50} + 16 a_{12} a_{32} a_{50}\\
 & \hspace{0.5in} +4 a_{12}^2 a_{50}^2 + 32 a_{32} a_{50}^2 + 20 a_{12} a_{50}^3 + 23 a_{50}^4) ,\\
\omega_{17}&=0,\\
\omega_{18}&=-\frac{a_{03}}{61600 (a_{12} + 2 a_{50})} (4 s-13)
(4a_{32}+ 2 a_{12}a_{50}+ 5a_{50}^2)(112a_{12}^2a_{32}- 432a_{32}^2\\
&\hspace{0.5in} +56a_{12}^3a_{50}-96a_{12}a_{32} a_{50}+200a_{12}^2a_{50}^2 - 640a_{32}a_{50}^2\\
&\hspace{0.5in}+ 120a_{12}a_{50}^3 - 85a_{50}^4) ,\\
\omega_{19}&=\frac{315}{64}(1+s)c_{09},\\
\omega_{20}&=-\frac{a_{03}}{40840800(a_{12} + 2 a_{50})^2}
(4 s-15) (4a_{32}+ 2 a_{12}a_{50}+ 5a_{50}^2)(14372996 a_{12}^4 a_{32}\\
& \hspace{0.5in} - 63894256 a_{12}^2 a_{32}^2+ 34076160 a_{32}^3+7186498 a_{12}^5 a_{50}- 10734116 a_{12}^3 a_{32} a_{50}\\
&\hspace{0.5in} -12772032 a_{12} a_{32}^2 a_{50}+ 28572751 a_{12}^4 a_{50}^2-99036264 a_{12}^2 a_{32} a_{50}^2\\
& \hspace{0.5in} + 45544768 a_{32}^2 a_{50}^2 +26958196 a_{12}^3 a_{50}^3- 39087216 a_{12} a_{32} a_{50}^3),\\
\omega_{21}&=0,
\end{split}\end{equation}
\begin{equation*}
\begin{split}
\omega_{22}&=\frac{a_{03}}{11639628000(a_{12} + 2 a_{50})^2}(4a_{32}
+ 2 a_{12}a_{50}+ 5a_{50}^2)\,f_4;
\end{split}\end{equation*}
and for $a_{12}+2a_{50}= 0$,
\begin{equation*}
\begin{split}
\omega_{16}&=\frac{a_{03}}{520} (4 s-11) (-4 a_{32} + a_{50}^2)
(4 a_{32} + a_{50}^2) ,\\
\omega_{17}&=0,
\end{split}\end{equation*}
and in addition if $a_{32}=\frac{a_{50}^2}{4}$,
\begin{equation*}\begin{split}
\omega_{18}&=\frac{9\,a_{03}}{7700} (4 s-13) (4 a_{32} + a_{50}^2)
(24 a_{03}^2 + a_{50}^3) ,\\
\omega_{19}&=\frac{315}{64}(s+1)c_{09},\\
\omega_{20}&=-\frac{7\,a_{03}}{13260} (4 s-15) a_{50}^6 ,\\
\omega_{21}&=0 ,\\
\omega_{22}&=-\frac{a_{03}a_{50}^7}{13856700 (1 + s)}(5391 - 205861 s + 66718 s^2) ;
\end{split}
\end{equation*}
if $a_{32}=-\frac{a_{50}^2}{4}$,
\begin{equation*}\begin{split}
\omega_{18}&=0,\\
\omega_{19}&=\frac{315}{64}(s+1)c_{09},\\
\omega_{20}&=\frac{2\,a_{03}}{5525}(4 s-15)(16a_{03}^2 +a_{50}^3) (27a_{03}^2 + 2a_{50}^3) ,\\
\omega_{21}&=0 ,\\
\omega_{22}&=-\frac{4\,a_{03}a_{50}}{1154725 (1 + s)}
(16a_{03}^2 +a_{50}^3) (27a_{03}^2 + 2a_{50}^3) ,\\
\end{split}\end{equation*}
where
\begin{equation*}
\begin{split}
f_4=& -175151884140096 a_{12}^5 a_{32} + 900479057104608 a_{12}^3 a_{32}^2
-870691837997952 a_{12} a_{32}^3\\
&  - 87575942070048 a_{12}^6 a_{50}+191710117401504 a_{12}^4 a_{32} a_{50}
+ 108696440458488 a_{12}^2 a_{32}^2 a_{50} \\
&-135343772601984 a_{32}^3 a_{50} - 348204560750520 a_{12}^5 a_{50}^2
+1449441463187484 a_{12}^3 a_{32} a_{50}^2\\
& -1227387674864544 a_{12} a_{32}^2 a_{50}^2
- 328545834076764 a_{12}^4 a_{50}^3+705733010157654 a_{12}^2 a_{32} a_{50}^3\\
&  - 180965555611680 a_{32}^2 a_{50}^3 -196530256579516 a_{12}^5 a_{32} s
+ 956545497558256 a_{12}^3 a_{32}^2 s\\
& -775708121404800 a_{12} a_{32}^3 s - 98265128289758 a_{12}^6 a_{50} s
+188198718433828 a_{12}^4 a_{32} a_{50} s \\
& - 92031891970176 a_{32}^3 a_{50} s -390699835897045 a_{12}^5 a_{50}^2 s
+ 1519172249991516 a_{12}^3 a_{32} a_{50}^2 s\\
&-1080068393470816 a_{12} a_{32}^2 a_{50}^2 s - 368634374118786 a_{12}^4 a_{50}^3 s +690504693129726 a_{12}^2 a_{32} a_{50}^3 s\\
& - 123060065895200 a_{32}^2 a_{50}^3 s +80500800862640 a_{12}^5 a_{32} s^2
- 396831061224512 a_{12}^3 a_{32}^2 s^2 \\
&+336523546570752 a_{12} a_{32}^3 s^2 + 40250400431320 a_{12}^6 a_{50} s^2
-79597335692936 a_{12}^4 a_{32} a_{50} s^2 \\
&-56511998888512 a_{12}^2 a_{32}^2 a_{50} s^2
+ 43311880631808 a_{32}^3 a_{50} s^2 +160035098537960 a_{12}^5 a_{50}^2 s^2\\
& -632261525950008 a_{12}^3 a_{32} a_{50}^2 s^2
+470151299998208 a_{12} a_{32}^2 a_{50}^2 s^2
+150997488382038 a_{12}^4 a_{50}^3 s^2
\end{split}\end{equation*}
\begin{equation*}
\begin{split}
& +142715605813496 a_{12}^2 a_{32}^2 a_{50} s
-292287689531688 a_{12}^2 a_{32} a_{50}^3 s^2
+ 57905489716480 a_{32}^2 a_{50}^3 s^2.
\end{split}
\end{equation*}
Based on \eqref{eq2.6} and \eqref{4.2}, it is easy to find the
first nine quasi-Lyapunov constants of system \eqref{eq4.1}.

\begin{theorem}\label{T4.1}
The first nine quasi-Lyapunov constants evaluated at origin of system \eqref{eq4.1} are given by
\begin{equation*}
\begin{split}
\lambda_1&=-\frac{1}{3}a_{21},\\
\lambda_2&=-\frac{1}{5}(6 a_{03} + a_{41}),\\
\lambda_3&=-\frac{1}{7}(2 a_{03}a_{12}-2a_{23}+3a_{03}a_{50}) ,\\
\lambda_4&=\frac{1}{30}(40 a_{05}- 4 a_{03} a_{32} - 2 a_{03} a_{12} a_{50} - 5 a_{03} a_{50}^2),\\
\lambda_5&=\frac{a_{031}}{154} (48 a_{03}^2 - 40 a_{14}+ 12 a_{12} a_{32}
+ 6 a_{12}^2 a_{50} + 12 a_{32} a_{50} + 21 a_{12} a_{50}^2 + 18 a_{50}^3).
\end{split}
\end{equation*}
Then, for $a_{12}+2a_{50}\neq 0$,
\begin{equation*}\begin{split}
\lambda_6&=\frac{a_{03}}{520}(64 a_{03}^2 a_{12}
+ 16 a_{32}^2 + 128 a_{03}^2 a_{50} + 16 a_{12} a_{32} a_{50} +
 4 a_{12}^2 a_{50}^2 + 32 a_{32} a_{50}^2\\
 &~~~~+ 20 a_{12} a_{50}^3 + 23 a_{50}^4) ,\\
\lambda_7&=-\frac{a_{03}}{61600 (a_{12} + 2 a_{50})}
(4a_{32}+ 2 a_{12}a_{50}+ 5a_{50}^2)(112a_{12}^2a_{32}- 432a_{32}^2+56a_{12}^3a_{50}\\
& -96a_{12}a_{32} a_{50}+200a_{12}^2a_{50}^2 - 640a_{32}a_{50}^2
+ 120a_{12}a_{50}^3 - 85a_{50}^4) ,\\
\lambda_8&=-\frac{a_{03}}{40840800(a_{12} + 2 a_{50})^2}(4a_{32}
+ 2 a_{12}a_{50}+ 5a_{50}^2)(14372996 a_{12}^4 a_{32}\\
& \ \ \ - 63894256 a_{12}^2 a_{32}^2+ 34076160 a_{32}^3
+7186498 a_{12}^5 a_{50} - 10734116 a_{12}^3 a_{32} a_{50}\\
& \ \ \ -12772032 a_{12} a_{32}^2 a_{50} + 28572751 a_{12}^4 a_{50}^2
-99036264 a_{12}^2 a_{32} a_{50}^2\\
& \ \ \ + 45544768 a_{32}^2 a_{50}^2 +26958196 a_{12}^3 a_{50}^3
- 39087216 a_{12} a_{32} a_{50}^3) ,\\
\lambda_9&=\frac{a_{03}}{11639628000(a_{12} + 2 a_{50})^2}(4a_{32}
+ 2 a_{12}a_{50}+ 5a_{50}^2)\,f_4;
\end{split}
\end{equation*}
while for $a_{12}+2a_{50}= 0$,
\begin{equation*}
\begin{split}
\lambda_6&=\frac{a_{03}}{520} (-4 a_{32} + a_{50}^2)
(4 a_{32} + a_{50}^2) ,\\
\end{split}\end{equation*}
and in addition if $a_{32}=\frac{a_{50}^2}{4}$,
\begin{equation*}\begin{split}
\lambda_7&=\frac{9\,a_{03}}{7700}(4 a_{32} + a_{50}^2)
(24 a_{03}^2 + a_{50}^3) ,\\
\lambda_8&=-\frac{7\,a_{03}}{13260} a_{50}^6 ,\\
\lambda_9&=-\frac{a_{03}a_{50}^7}{13856700 (1 + s)}(5391 - 205861 s + 66718 s^2) ;
\end{split}
\end{equation*}
if $a_{32}=-\frac{a_{50}^2}{4}$,
\begin{equation*}\begin{split}
\lambda_7&=0,\\
\lambda_8&=\frac{2\,a_{03}}{5525}(16a_{03}^2 +a_{50}^3) (27a_{03}^2 + 2a_{50}^3) ,\\
\lambda_9&=-\frac{4\,a_{03}a_{50}}{1154725 (1 + s)}
(16a_{03}^2 +a_{50}^3) (27a_{03}^2 + 2a_{50}^3) ,\\
\end{split}\end{equation*}
where $\lambda_{k-1}=0$ for $k=2,\cdots,9$ have been used in
computing $\lambda_k$.
\end{theorem}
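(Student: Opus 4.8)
The plan is to specialize the recursive machinery of Theorems~\ref{T2.1} and \ref{T2.2} to system~\eqref{eq4.1} and then extract the quasi-Lyapunov constants through the normalization~\eqref{eq2.6}. First I would put \eqref{eq4.1} into the canonical form~\eqref{eq2.1}. Since $\dot x=y$ identically, we have $\mu=0$ and $a_{ij}=0$ for all $i,j$, while the nonlinear part of $\dot y$ supplies $b_{21}=a_{21}$, $b_{12}=a_{12}$, $b_{03}=a_{03}$, $b_{50}=a_{50}$, $b_{41}=a_{41}$, $b_{32}=a_{32}$, $b_{23}=a_{23}$, $b_{14}=a_{14}$, $b_{05}=a_{05}$, and all remaining $b_{ij}=0$. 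In particular $s\mu=0$ holds automatically, so for every natural number $s$ Theorem~\ref{T2.1} yields a formal series $M(x,y)=x^4+y^2+o((x^2+y^2)^2)$ satisfying~\eqref{eq2.3}, with the coefficients $c_{\alpha\beta}$ ($\alpha\ge1$) pinned down by~\eqref{eq2.4}; the free sequence $\{c_{0\beta}\}$ only feeds into the $\omega_m$ with $m\equiv3\pmod4$ (the terms $3(s+1)c_{03}$, $\tfrac{15}{4}(s+1)c_{05}$, and so on), which never enter~\eqref{eq2.6}, so those free parameters may be set to zero.

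Next I would run the recursion~\eqref{eq2.5}, $\omega_m=A_{m,0}+B_{m,0}$, with the $A,B$ sums of Theorem~\ref{T2.2}, using Mathematica. This reproduces the list of $\omega_3,\dots,\omega_{22}$ displayed above (including those in~\eqref{4.2}): $\omega_3=\omega_4=\omega_5=0$, the intermediate odd $\omega_{2k+1}$ vanish, and the relevant even ones begin at $\omega_6=-\tfrac13(4s-1)a_{21}$. Feeding these into~\eqref{eq2.6}, $\lambda_m=\omega_{2m+4}/(2m-4s-1)$, and at each stage substituting $\lambda_1=\cdots=\lambda_{m-1}=0$ back into the coefficient relations before forming $\lambda_m$ — so that $\lambda_m$ is a \emph{quasi}-Lyapunov constant defined modulo the ideal of its predecessors and up to a positive constant — yields $\lambda_1,\dots,\lambda_5$ unconditionally, and then forces a branching.

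The branching, which is the delicate bookkeeping part, is caused by the denominators $a_{12}+2a_{50}$ that surface in $\omega_{18}$, $\omega_{20}$, $\omega_{22}$. I would split into case (i) $a_{12}+2a_{50}\neq0$, where the recursion runs through directly and produces the stated $\lambda_6,\dots,\lambda_9$ (keeping the common factor $4a_{32}+2a_{12}a_{50}+5a_{50}^2$ symbolic, since the branch hypothesis only excludes $a_{12}+2a_{50}=0$), and case (ii) $a_{12}+2a_{50}=0$. In case (ii) one re-evaluates the series with $a_{12}=-2a_{50}$, obtaining $\lambda_6=\tfrac{a_{03}}{520}(-4a_{32}+a_{50}^2)(4a_{32}+a_{50}^2)$; the subcase $a_{03}=0$ collapses everything through $\lambda_2=0$ and is trivial, while imposing $\lambda_6=0$ with $a_{03}\neq0$ gives the two sub-branches $a_{32}=\tfrac14 a_{50}^2$ and $a_{32}=-\tfrac14 a_{50}^2$, in each of which the recursion is continued to yield the listed $\lambda_7,\lambda_8,\lambda_9$. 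Note that in these sub-branches $\lambda_9$ retains an explicit $s$-dependence (e.g.\ the factor $5391-205861s+66718s^2$), which is exactly the feature the $\infty$-class criterion of Theorem~\ref{T2.3} later exploits.

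I expect the main obstacle to be essentially computational and organizational rather than conceptual: the intermediate polynomials — culminating in $f_4$ — are very large, so the real work is in carrying out the substitutions $\lambda_{k-1}=0$ cleanly at each step, confirming that the branches $(i)$, $(ii)$ with its two sub-branches are exhaustive, and checking that within each branch no vanishing denominator (in particular no factor of $a_{12}+2a_{50}$ or $4a_{32}+2a_{12}a_{50}+5a_{50}^2$) has inadvertently been divided away, so that the displayed formulas for $\lambda_1,\dots,\lambda_9$ are valid exactly as stated under the standing reductions $\lambda_{k-1}=0$.
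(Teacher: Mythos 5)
Your proposal is correct and follows essentially the same route as the paper: specialize the recursive formulae of Theorems~\ref{T2.1} and~\ref{T2.2} to system~\eqref{eq4.1} (with $\mu=0$ and all coefficients entering through the $\dot y$ equation, exactly as in the Mathematica code given in the Appendix), compute the $\omega_m$ symbolically, absorb the free $c_{0\beta}$ into the $\omega_{4k+3}$, and read off $\lambda_m=\omega_{2m+4}/(2m-4s-1)$ modulo the vanishing of the preceding constants, branching on $a_{12}+2a_{50}$ and then on $a_{32}=\pm\tfrac14 a_{50}^2$. This matches the paper's (purely computational) derivation of Theorem~\ref{T4.1}.
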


Furthermore, the following result can be easily obtained.
\begin{proposition}\label{P4.1}
For $n=1$, origin of system \eqref{eq4.1} is an analytic center
if and only if one of the following conditions holds:
\begin{equation}\label{eq4.3}
a_{21}=a_{03}=a_{41}=a_{23}=a_{05}=0;
\end{equation}
\begin{equation}\label{eq4.4}\begin{split}
&a_{21}=a_{14}=a_{05}=0, \quad a_{41}=-6a_{03}, \quad\\
&a_{23}= \textstyle\frac{1}{2}(2 a_{12} + 3 a_{50})a_{03},\quad
a_{03}^2=-\frac{1}{16}a_{50}^3;
\end{split}\end{equation}
\begin{equation}\begin{split}\label{eq4.5}
&a_{21}\!=\!a_{05}\!=\!0, \ a_{41}\!=\!-6a_{03}, \ a_{12}\!=\!-2a_{50}, \
a_{23}\!=\!- \textstyle\frac{1}{2}a_{03}a_{50}, \\
&\ a_{32}\!=\!\frac{1}{4}a_{50}^2, \
a_{03}^2\!=\!-\frac{2}{27}a_{50}^3, \ a_{14}\!=\!-\frac{1}{72}a_{50}^3.
\end{split}\end{equation}
\end{proposition}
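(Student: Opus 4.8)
The plan is to follow the same strategy as in the proof of Proposition~\ref{P3.1}: use the explicit list of quasi-Lyapunov constants in Theorem~\ref{T4.1} to pin down the necessary conditions, and then produce first integrals to establish sufficiency, invoking Theorem~\ref{T2.3} to pass from ``center of $\infty$-class'' to ``analytic center.''

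\textbf{Necessity.} I would impose $\lambda_1=\lambda_2=\cdots=\lambda_9=0$ and solve recursively. From $\lambda_1=0$ one gets $a_{21}=0$; from $\lambda_2=0$, $a_{41}=-6a_{03}$; from $\lambda_3=0$ and $\lambda_4=0$ one solves for $a_{23}$ and $a_{05}$. The first dichotomy then appears. If $a_{03}=0$, all of $a_{23},a_{05},a_{41}$ vanish, system~\eqref{eq4.1} collapses to one in which $\dot y$ carries only the monomials $-2x^3,\ a_{12}xy^2,\ a_{50}x^5,\ a_{32}x^3y^2,\ a_{14}xy^4$, and condition~\eqref{eq4.3} holds. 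If $a_{03}\neq0$, then $\lambda_5=0$ determines $a_{14}$, and $\lambda_6=0$ splits into the branch $a_{12}+2a_{50}\neq0$, which gives a polynomial constraint among $a_{03},a_{12},a_{32},a_{50}$, and the branch $a_{12}+2a_{50}=0$, which forces $a_{32}=\pm\tfrac14 a_{50}^2$. Along each branch I would continue with $\lambda_7,\lambda_8,\lambda_9$ (for the $s$-dependent expressions requiring vanishing for every admissible $s$, as demanded by Theorem~\ref{T2.3}) and, exactly as in Proposition~\ref{P3.1}, take successive resultants of $f_4$ and of the bracket polynomials appearing in $\lambda_6$--$\lambda_9$ with respect to $a_{32}$, then $a_{12}$, then $a_{03}$. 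Showing these resultants are nonzero off the loci~\eqref{eq4.4} and~\eqref{eq4.5} eliminates every spurious branch and proves that \eqref{eq4.3}, \eqref{eq4.4}, \eqref{eq4.5} are the only candidates.

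\textbf{Sufficiency.} Under~\eqref{eq4.3}, the reduced system has $\dot x=y$ while $\dot y$ is a polynomial in $x$ carrying only even powers of $y$, hence it is invariant under $(x,y,t)\mapsto(x,-y,-t)$, i.e.\ it is time-reversible with respect to the $x$-axis, so the origin is a center; its analyticity follows from Theorem~11 of~\cite{Li-2013a} (or from exhibiting the associated integrating factor). Under~\eqref{eq4.4} and~\eqref{eq4.5} there is no such symmetry, so I would instead look for an explicit inverse integrating factor $V(x,y)$ --- a polynomial or a Darboux-type product of powers of invariant algebraic curves --- solving $X\,\partial_x V+Y\,\partial_y V=V(\partial_x X+\partial_y Y)$; then $1/V$ is an integrating factor and integration yields an analytic first integral. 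Equivalently, one may verify through Theorems~\ref{T2.1} and~\ref{T2.2} that, under \eqref{eq4.4} (resp.\ \eqref{eq4.5}), the formal series $M(x,y)$ can be summed in closed form so that $\omega_m(s,\mu)=0$ for all $m$ and all $s$, whereupon Theorem~\ref{T2.3} gives the analytic center.

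\textbf{Main obstacle.} The delicate part is the sufficiency for the two asymmetric strata \eqref{eq4.4} and \eqref{eq4.5}: being non-reversible, they force one to actually produce the correct inverse integrating factor (or the closed-form series $M$), and the rigid numerical relations $a_{03}^2=-\tfrac{1}{16}a_{50}^3$ and $a_{03}^2=-\tfrac{2}{27}a_{50}^3$ strongly suggest that the relevant invariant curves are non-obvious algebraic curves whose exponents must be tuned exactly. On the necessity side, the remaining care is the bookkeeping of the nested case split together with the resultant elimination, so as to be certain that no integrable component has been overlooked.
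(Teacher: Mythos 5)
Your necessity argument and your treatment of condition \eqref{eq4.3} match the paper's proof: impose $\lambda_1=\cdots=\lambda_9=0$, split on $a_{03}$ and on $a_{12}+2a_{50}$, and eliminate the remaining branches by resultants of $f_4,f_5,f_6$ in $a_{32}$ and then $a_{12}$ (the paper's $R_3$, $R_4$, $G_2\neq0$ computation); for the symmetric stratum \eqref{eq4.6} the paper invokes reversibility with respect to the $y$-axis, i.e.\ invariance under $(x,t)\mapsto(-x,-t)$, rather than your $x$-axis symmetry, but both hold for that system and either yields the center via Theorem~11 of \cite{Li-2013a}.

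The gap is exactly where you flag it: sufficiency on the strata \eqref{eq4.4} and \eqref{eq4.5} is not proved, only a search strategy is described. Announcing that you ``would look for'' a Darboux-type inverse integrating factor, or that the series $M$ ``can be summed in closed form,'' does not establish that such an object exists --- producing it is the entire content of the sufficiency step. The paper does this concretely. For \eqref{eq4.4} it does \emph{not} use an integrating factor at all: it applies the rational substitution $y=\frac{(-2+a_{50}x^2)z}{2(-2+a_{50}x^2+a_{03}xz)}$ together with a time rescaling, which converts \eqref{eq4.7} into system \eqref{eq4.8}, symmetric with respect to the $z$-axis precisely because $a_{03}^2=-\tfrac{1}{16}a_{50}^3$; so this stratum is a \emph{hidden} reversible one, contrary to your claim that no symmetry is available there. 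For \eqref{eq4.5} the paper exhibits the explicit analytic integrating factor $u(x,y)=e^{-\frac38 a_{50}^2x^4}\bigl(1-\tfrac12 a_{50}x^2+\tfrac34 a_{50}xy\bigr)^{-4}$. Without these (or equivalent) explicit constructions your argument establishes only necessity, and the ``if'' direction of the proposition remains open for two of the three conditions.
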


\begin{proof}
It is easy to get the conditions \eqref{eq4.3}-\eqref{eq4.5} by setting
$\lambda_1=\lambda_2=\cdots=\lambda_{9}=0$.
When $a_{50}\neq 0$, let
\begin{equation*}
\begin{split}
f_5=&\ 112 a_{12}^2 a_{32} - 432 a_{32}^2 + 56 a_{12}^3 a_{50}
- 96 a_{12} a_{32} a_{50} +200 a_{12}^2 a_{50}^2 - 640 a_{32} a_{50}^2\\
&+ 120 a_{12} a_{50}^3 - 85 a_{50}^4,\\
 f_6= & \ 14372996 a_{12}^4 a_{32} - 63894256 a_{12}^2 a_{32}^2
+ 34076160 a_{32}^3 +7186498 a_{12}^5 a_{50}\\
&- 10734116 a_{12}^3 a_{32} a_{50}-12772032 a_{12} a_{32}^2 a_{50} + 28572751 a_{12}^4 a_{50}^2\\
&-99036264 a_{12}^2 a_{32} a_{50}^2 + 45544768 a_{32}^2 a_{50}^2 \\
&+26958196 a_{12}^3 a_{50}^3 - 39087216 a_{12} a_{32} a_{50}^3.
\end{split}
\end{equation*}
Then, we obtain
\begin{equation*}
\begin{split}
R_3= & \ {\rm Resultant}[f_4,f_5,a_{32}]\\
=& -200206652313600 a_{50}^3 (a_{12} + 2 a_{50})^4 (72 a_{12}^5
+ 652 a_{12}^4 a_{50} +2694 a_{12}^3 a_{50}^2 \\
&+ 6043 a_{12}^2 a_{50}^3 + 7092 a_{12} a_{50}^4 + 3463 a_{50}^5),\\
R_4= & \ {\rm Resultant} [f_4,f_6,a_{32}]\\
= & -289298612593152000 a_{50}^3 (a_{12} + 2 a_{50})^4 (278734084992 a_{12}^7+3588989330016 a_{12}^6 a_{50}\\
&+ 21026493958464 a_{12}^5 a_{50}^2+71854303647672 a_{12}^4 a_{50}^3+ 152324731255716 a_{12}^3 a_{50}^4 \\
&+197669760539760 a_{12}^2 a_{50}^5 + 144211704399495 a_{12} a_{50}^6+45502270176438 a_{50}^7 \\
&+1651136278704 a_{12}^7s+18838630495800 a_{12}^6 a_{50} s+ 98153819816532 a_{12}^5 a_{50}^2 s\\
&+294846821571666 a_{12}^4 a_{50}^3 s + 534120897148782 a_{12}^3 a_{50}^4 s+565346904516561 a_{12}^2 a_{50}^5 s\\
&-310952885702031 a_{12} a_{50}^6 s+62534145621954 a_{50}^7 s + 1859553268056 a_{12}^7 s^2
\end{split}
\end{equation*}
\begin{equation*}
\begin{split}
&+17577034105268 a_{12}^6 a_{50} s^2 + 76054378966082 a_{12}^5 a_{50}^2 s^2+181916843290105 a_{12}^4 a_{50}^3 s^2\\
&+238094684972342 a_{12}^3 a_{50}^4 s^2+147267056136244 a_{12}^2 a_{50}^5 s^2+ 19507558179230 a_{12} a_{50}^6 s^2 \\
&-6987410240074 a_{50}^7 s^2- 1714957345728 a_{12}^7 s^3-17270801713568 a_{12}^6 a_{50} s^3\\
&- 80278393721648 a_{12}^5 a_{50}^2 s^3-212296048411048 a_{12}^4 a_{50}^3 s^3-328607615124608 a_{12}^3 a_{50}^4 s^3\\
& -285216632755972 a_{12}^2 a_{50}^5 s^3- 121914622605938 a_{12} a_{50}^6 s^3-19568176640258 a_{50}^7 s^3\\
&+ 301352777088 a_{12}^7 s^4+3072037299008 a_{12}^6 a_{50} s^4+ 14465726297408 a_{12}^5 a_{50}^2 s^4 \\
&+ 38896515312256 a_{12}^4 a_{50}^3 s^4+ 61684793716376 a_{12}^3 a_{50}^4 s^4+55602984609100 a_{12}^2 a_{50}^5 s^4\\
&+ 25319000517368 a_{12} a_{50}^6 s^4+4451109045332 a_{50}^7 s^4).
\end{split}
\end{equation*}
Further, with the aid of Mathematica, we obtain for $\forall s \in Z^+$
\begin{equation*}
\begin{split}
G_2= & \ {\rm Resultant} [R_3,R_4,a_{12}]\\
= & \ 30984189289342953910272000 a_{50}^{35} (1 + s)^5 (-17 +4 s)^5\\
&\times(-123287750793562256929839075859953216\\
& -33902812452688795016920021129342044624 s\\
&-180855325034978657368019444342423080236 s^2\\
& -1067066959204615961659004741488392865575 s^3\\
&-3328343437962444375340762099992891472110 s^4\\
& -4773196954655562390848005555854946921459 s^5\\
&+14241540803784759916727236436410335714320 s^6 \\
& -10732088467496096467063502795815305475120 s^7\\
&+3721436248399857364295558363131840668032 s^8 \\
& -625676462230475741935920059840273863168 s^9\\
&+41454785818979861302571809000901003264 s^{10})\neq 0.\\
\end{split}
\end{equation*}
The above calculations indicate that the equations
$f_4=f_5=f_6=0$ do not have real solutions, namely, there do not exist
other analytic center conditions for system \eqref{eq4.1} if $a_{50}\ne 0$.

When the conditions in \eqref{eq4.3} hold, system \eqref{eq4.1} becomes
\begin{equation}\label{eq4.6}
\begin{array}{l}
\begin{split}
\dot{x}&=y,\\
\dot{y}&=-2x^3+a_{12}xy^2+a_{50}x^5+a_{32}x^3y^2+a_{14}xy^4.\\
\end{split}
\end{array}\end{equation}
Obviously, this system is symmetric with the $y$-axis, implying that
the origin of system \eqref{eq4.6} is an analytic center due to
Theorem 11 in \cite{Li-2013a}.

When the conditions in \eqref{eq4.4} are satisfied, system \eqref{eq4.1} becomes
\begin{equation}\label{eq4.7}
\begin{array}{l}
\begin{split}
\dot{x}=& \ y,\\
\dot{y}= & \ \frac{1}{4}(-8 x^3 + 4a_{50} x^5 - 24a_{03} x^4 y
+ 4 a_{12} x y^2 - 2 a_{12} a_{50} x^3 y^2 \\
&\quad \, -5 a_{50}^2 x^3 y^2 + 4 a_{03} y^3 + 4 a_{03} a_{12} x^2 y^3
+ 6 a_{03} a_{50} x^2 y^3).
\end{split}
\end{array}
\end{equation}
Introducing the transformation,
\begin{equation*}
x=x, \quad
y=\frac{(-2 + a_{50} x^2) z}{2 (-2 +a_{50}x^2 + a_{03} x z)},
\end{equation*}
and time scaling,
$$
T=\frac{2 (-2 + a_{50} x^2)^3t}{-2 + a_{50} x^2 - 2 a_{03} x y},
$$
into system \eqref{eq4.7} results in
\begin{equation}\label{eq4.8}
\begin{array}{l}
\begin{split}
\frac{dx}{dT} =& \ z(a_{50}x^2-2)^2,\\
\frac{dz}{dT} =&-\frac{1}{4} x \big( 128 x^2 - 192 a_{50}x^4 + 96a_{50}^2 x^6
- 16a_{50}^3 x^8 - 16 a_{12} z^2 \\
&+24  a_{12} a_{50} x^2 z^2+ 20 a_{50}^2 x^2 z^2 - 96 a_{03}^2 x^4 z^2 -12  a_{12} a_{50}^2 x^4 z^2\\
&- 20 a_{50}^3 x^4 z^2 + 48 a_{03}^2 a_{50} x^6 z^2+2  a_{12} a_{50}^3 x^6 z^2 + 5 a_{50}^4 x^6 z^2\\
& + 32 a_{03}^3 x^5 z^3+2 a_{03} a_{50}^3 x^5 z^3),
\end{split}
\end{array}
\end{equation}
which is symmetric with the $z$-axis because $a_{03}^2=-\frac{a_{50}^3}{16}$.
Thus, according to Theorem 11 in \cite{Li-2013a},
the origin of system \eqref{eq4.7} is an analytic center.

Similarly, when the conditions in \eqref{eq4.5} hold, system \eqref{eq4.1} becomes
\begin{equation}\label{eq4.9}
\begin{array}{l}
\begin{split}
\dot{x}= & \ y,\\
\dot{y}= &\ \frac{1}{72}(-144 x^3 + 72 a_{50} x^5 - 432 a_{03} x^4 y
- 144a_{50} x y^2 -18 a_{50}^2 x^3 y^2\\
&+ 72 a_{03} y^3- 36 a_{03} a_{50} x^2 y^3 -a_{50}^3 x y^4),
\end{split}
\end{array}\end{equation}
for which there exists an analytic integrating factor,
$$
u(x,y)=\frac{e^{-\frac{3}{8} a_{50}^2 x^4}}
{(1 - \frac{1}{2}a_{50} x^2 + \frac{3}{4} a_{50} x y)^4},
$$
indicating that the origin of system \eqref{eq4.9} is an analytic center.
\end{proof}

Therefore, Proposition \ref{P4.1} implies the following result.
\begin{theorem}\label{T4.1}
The necessary and sufficient conditions for the origin of system \eqref{eq4.1}
being an analytic center are given by the vanishing of the first nine quasi-Lyapunov constants,
that is, one of the conditions in Proposition \ref{P4.1} is satisfied.
\end{theorem}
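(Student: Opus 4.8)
The plan is to obtain this statement as a direct consequence of Proposition \ref{P4.1}, observing that its proof used only the first nine quasi-Lyapunov constants. For the necessity direction I would argue that if the origin of \eqref{eq4.1} is an analytic center, then by Theorem \ref{T2.3} it is a center of $\infty$-class, hence a center for every natural number $s$; in particular every quasi-Lyapunov constant vanishes, so $\lambda_1=\lambda_2=\cdots=\lambda_9=0$ and, by Proposition \ref{P4.1}, one of the three sets of relations \eqref{eq4.3}, \eqref{eq4.4}, \eqref{eq4.5} must hold.

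For the sufficiency direction I would replay the elimination from the proof of Proposition \ref{P4.1} under the single hypothesis $\lambda_1=\cdots=\lambda_9=0$. Vanishing of $\lambda_1,\lambda_2$ forces $a_{21}=0$ and $a_{41}=-6a_{03}$, and one then splits into the branches $a_{03}=0$, $(a_{03}\neq0,\ a_{12}+2a_{50}\neq0)$, and $(a_{03}\neq0,\ a_{12}+2a_{50}=0)$. In the first branch the remaining equations $\lambda_3=\lambda_4=\lambda_5=0$ collapse to \eqref{eq4.3}. In the second, if $a_{50}=0$ one lands on a system of the form \eqref{eq4.6}; if $a_{50}\neq0$, then since $\lambda_7,\lambda_8,\lambda_9$ are, up to a common nonzero scalar, the polynomial $4a_{32}+2a_{12}a_{50}+5a_{50}^2$ multiplied respectively by $f_5,f_6,f_4$, their simultaneous vanishing forces either $4a_{32}+2a_{12}a_{50}+5a_{50}^2=0$ or $f_4=f_5=f_6=0$; the latter is excluded by the resultant chain $R_3,R_4$ together with the non-vanishing of $G_2$ for all $s\in\mathbb{Z}^{+}$ established in Proposition \ref{P4.1}, so $4a_{32}+2a_{12}a_{50}+5a_{50}^2=0$, and feeding this back through $\lambda_3,\lambda_4,\lambda_5,\lambda_6$ yields exactly \eqref{eq4.4}. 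In the third branch one uses the alternative expansions for $\omega_{16},\dots,\omega_{22}$: $\lambda_6=0$ forces $a_{32}=\pm\tfrac14 a_{50}^2$, and in each sub-case $\lambda_7=\lambda_8=\lambda_9=0$ forces either $a_{50}=0$, which returns to an already-treated family, or precisely the relations \eqref{eq4.5}. Hence $\lambda_1=\cdots=\lambda_9=0$ implies one of \eqref{eq4.3}--\eqref{eq4.5}.

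It then remains only to close the loop by invoking the integrability constructions already exhibited in Proposition \ref{P4.1}: under \eqref{eq4.3} the reduced system \eqref{eq4.6} is symmetric with respect to the $y$-axis; under \eqref{eq4.4} the displayed change of variables and time rescaling turn the system into \eqref{eq4.8}, which is symmetric with respect to the $z$-axis; and under \eqref{eq4.5} the system \eqref{eq4.9} possesses the displayed analytic integrating factor. In all three cases Theorem 11 of \cite{Li-2013a} provides an analytic first integral, so the origin is an analytic center and all higher-order quasi-Lyapunov constants $\lambda_{10},\lambda_{11},\dots$ vanish automatically; this is the consistency needed to conclude that the analytic-center locus of \eqref{eq4.1} is cut out precisely by $\lambda_1=\cdots=\lambda_9=0$, which is the assertion.

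The step I expect to be the genuine obstacle is not the case analysis but certifying that nine constants already suffice, i.e. that no further condition $\lambda_{10}=0,\lambda_{11}=0,\dots$ could carve out an extra component of the center variety. This is handled entirely by the elimination-theoretic computations in the proof of Proposition \ref{P4.1}: after pulling out the factor $(1+s)^5(4s-17)^5$ from $G_2$ one is left with an explicit degree-ten polynomial in $s$, and the argument hinges on checking that this polynomial has no positive-integer root, equivalently $G_2\neq0$ for every $s\in\mathbb{Z}^{+}$. A secondary nuisance is the degenerate branch $a_{12}+2a_{50}=0$ with $a_{32}=\pm\tfrac14 a_{50}^2$, where one must be careful to use the separate $\omega$-expansions listed for that case; there, however, the eliminations are one-variable and routine.
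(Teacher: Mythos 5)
Your proposal is correct and follows essentially the same route as the paper, which simply derives this theorem as an immediate consequence of Proposition \ref{P4.1} (necessity via Theorem \ref{T2.3} and the vanishing of all quasi-Lyapunov constants, sufficiency via the symmetry, the change of variables, and the integrating factor already exhibited there). Your additional remarks about the resultant chain $R_3,R_4,G_2$ and the degenerate branch $a_{12}+2a_{50}=0$ merely spell out details the paper delegates entirely to the proof of Proposition \ref{P4.1}.
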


Similarly, when the cubic-order nilpotent singular point $O(0, 0)$
is a $9$th-order weak focus, it is easy to prove that the
perturbed system of \eqref{eq4.1}, given by
\begin{equation}\label{eq4.10}
\begin{array}{l}
\begin{split}
\dot{x}=&\ \delta x+y,\\
\dot{y}=&\ \delta y-2x^3+a_{21}x^2y+a_{12}xy^2+a_{03}y^3+a_{50}x^5+a_{41}x^4y\\
&+a_{32}x^3y^2+a_{23}x^2y^3+a_{14}xy^4+a_{05}y^5,\\
\end{split}
\end{array}\end{equation}
can generate nine limit cycles
enclosing an elementary node at the origin.
The proof is similar to that for Theorem 6.

\begin{theorem}\label{T4.2}
If the origin of system \eqref{eq4.10} is a $9$th-order weak focus,
then within a small neighborhood of the origin, for $0<\delta\ll1$,
system \eqref{eq4.10} can yield
nine small-amplitude limit cycles around the elementary node $O(0,0)$.
\end{theorem}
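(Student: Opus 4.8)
\noindent The plan is to imitate the proof of Theorem~\ref{T3.3}, one order lower. The first step is to make the $9$th-order weak-focus condition of system~\eqref{eq4.10} (equivalently, of \eqref{eq4.1}) explicit. Proposition~\ref{P4.1} has already classified all analytic centers of \eqref{eq4.1}, and the resultant computations there (the polynomials $R_3$, $R_4$, $G_2$) show that a parameter value with $\lambda_1=\cdots=\lambda_8=0$ and $\lambda_9\neq0$ cannot lie on any of the branches \eqref{eq4.3}--\eqref{eq4.5}; together with the case distinctions in Theorem~\ref{T4.1} this forces $a_{50}\neq0$, $a_{03}\neq0$, $a_{12}+2a_{50}\neq0$ and $4a_{32}+2a_{12}a_{50}+5a_{50}^2\neq0$ (otherwise $\lambda_9$ vanishes as well or one lands on an analytic-center branch). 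Under this genericity the equations $\lambda_1=\cdots=\lambda_5=0$ can be solved explicitly for
\[
a_{21}=0,\qquad a_{41}=-6a_{03},\qquad a_{23}=\tfrac{1}{2}a_{03}(2a_{12}+3a_{50}),\qquad a_{05}=\tfrac{1}{40}a_{03}(4a_{32}+2a_{12}a_{50}+5a_{50}^{2}),
\]
and for $a_{14}$ in terms of the remaining coefficients, leaving $\lambda_6=0$ (which determines $a_{03}^{2}$, since $a_{03}^{2}$ appears linearly in $\lambda_6/a_{03}$), $f_5=0$ and $f_6=0$ as the surviving weak-focus equations --- $f_5,f_6$ persist in $\lambda_7,\lambda_8$ precisely because $a_{03}$, $a_{12}+2a_{50}$ and $4a_{32}+2a_{12}a_{50}+5a_{50}^{2}$ do not vanish --- while $f_4\neq0$ records $\lambda_9\neq0$.

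The second step is to evaluate the $8\times8$ Jacobian
\[
J=\frac{\partial(\lambda_1,\lambda_2,\lambda_3,\lambda_4,\lambda_5,\lambda_6,\lambda_7,\lambda_8)}{\partial(a_{21},a_{41},a_{23},a_{05},a_{14},a_{03},a_{12},a_{32})},
\]
with $a_{50}$ held fixed, and to show that $J\neq0$ on the weak-focus variety. This simplifies considerably: the $5\times5$ block $\partial(\lambda_1,\dots,\lambda_5)/\partial(a_{21},a_{41},a_{23},a_{05},a_{14})$ is diagonal with entries $-\tfrac{1}{3},\,-\tfrac{1}{5},\,\tfrac{2}{7},\,\tfrac{4}{3},\,-\tfrac{20a_{03}}{77}$, and none of $\lambda_6,\lambda_7,\lambda_8$ involves $a_{21},a_{41},a_{23},a_{05},a_{14}$, so that
\[
J=-\frac{32\,a_{03}}{4851}\,\det\!\left[\frac{\partial(\lambda_6,\lambda_7,\lambda_8)}{\partial(a_{03},a_{12},a_{32})}\right].
\]
Since $\lambda_6,\lambda_7,\lambda_8$ depend on $a_{03}$ only through $a_{03}^{2}$, this $3\times3$ minor is a rational function of $a_{03}^{2},a_{12},a_{32},a_{50},s$; eliminating $a_{03}^{2}$ via $\lambda_6=0$ turns its numerator into a polynomial $D^{*}(a_{12},a_{32},a_{50},s)$, and I would then certify $D^{*}\neq0$ on $\{f_5=f_6=0\}$ by the same elimination scheme that produced $G_2$ in Proposition~\ref{P4.1}: compute $\mathrm{Resultant}[f_5,f_6,a_{32}]$ and $\mathrm{Resultant}[f_5,D^{*},a_{32}]$, eliminate $a_{12}$ between the two, and verify with Mathematica that the result is a nonzero constant times a power of $a_{50}$ times factors in $s$ that have no roots among the positive integers.

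Once $J\neq0$ on the weak-focus variety is in hand, the map $(a_{21},a_{41},a_{23},a_{05},a_{14},a_{03},a_{12},a_{32})\mapsto(\lambda_1,\dots,\lambda_8)$ is a submersion there, so the eight quasi-Lyapunov constants can be assigned arbitrarily small independent values of alternating signs while $\lambda_9\neq0$ is preserved; adjoining the linear perturbation $\delta x,\ \delta y$, which turns the cusp-type nilpotent point into an elementary (degenerate) node, and applying Theorem~2.2 in \cite{Liu-2010} (equivalently Theorem~4.1.5 in \cite{LiuJ-2009}) then yields nine small-amplitude limit cycles enclosing $O(0,0)$ for $0<\delta\ll1$. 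The only genuinely hard part is computational: after the substitutions the $3\times3$ minor and the iterated resultants are very large polynomials in $a_{50}$ and $s$ whose irreducible $s$-factors must be shown to avoid the positive integers; the conceptual content --- the block/diagonal structure that collapses $J\neq0$ to a single $3\times3$ determinant, and the reduction of that non-vanishing to a resultant over $\{f_5=f_6=0\}$ --- is routine once the eight differentiation variables have been fixed.
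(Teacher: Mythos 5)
Your proposal follows essentially the same route as the paper's proof: make the $9$th-order weak-focus conditions explicit, show that the Jacobian of $(\lambda_1,\dots,\lambda_8)$ with respect to the eight coefficients $(a_{21},a_{41},a_{23},a_{05},a_{14},a_{03},a_{32},a_{12})$ is nonzero on the weak-focus variety via iterated resultants, and invoke Theorem 2.2 of \cite{Liu-2010}; your block-diagonal reduction of the Jacobian to a single $3\times3$ minor and your choice to eliminate against $f_5,f_6$ (rather than the paper's $R_6=\mathrm{Resultant}[f_5,J_2,a_{32}]$ paired with $R_3$) are only organizational variants of the same argument. The one outstanding item is that the decisive resultant computation certifying $J\neq 0$ on $\{f_5=f_6=0\}$ is described but not actually carried out, whereas the paper reports the explicit nonzero value of its corresponding resultant.
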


\begin{proof}
The origin of system \eqref{eq4.10} is a $9$th-order weak focus if and only if
\begin{equation*}
\begin{split}
a_{21}&= 0; \\
a_{41}&= -6a_{03}; \\
a_{23}&= \frac{1}{2}a_{03}(2 a_{12} + 3 a_{50});\\
a_{05}&=\frac{1}{40}(4 a_{03} a_{32} + 2 a_{03} a_{12} a_{50} + 5 a_{03} a_{50}^2); \\
a_{14}&=\frac{3}{40}(16 a_{03}^2 + 4 a_{12} a_{32} + 2 a_{12}^2 a_{50} + 4 a_{32} a_{50} + 7 a_{12} a_{50}^2 + 6 a_{50}^3);\\
a_{03}^2&=\frac{1}{64 (a_{12} + 2 a_{50})} (-16 a_{32}^2 - 16 a_{12} a_{32} a_{50} - 4 a_{12}^2 a_{50}^2 - 32 a_{32} a_{50}^2 -
 20 a_{12} a_{50}^3 - 23 a_{50}^4).
\end{split}
\end{equation*}
and
\begin{equation*}
\begin{split}
J_2&=\frac{\partial(\lambda_1,\lambda_2,\lambda_3,\lambda_4,\lambda_5,\lambda_6,\lambda_7,\lambda_8)}{\partial(a_{21},a_{41},a_{23},a_{05},a_{14},a_{03},a_{32},a_{12})}\\
&=-\frac{(4 a_{32} + 2 a_{12} a_{50} + 5 a_{50}^2)}{38846808000 (a_{12} + 2 a_{50})^3}(16 a_{32}^2 + 16 a_{12} a_{32} a_{50} + 4 a_{12}^2 a_{50}^2 + 32 a_{32} a_{50}^2+20 a_{12} a_{50}^3 + 23 a_{50}^4)^2\\
&\times (9658653312 a_{12}^6 a_{32}^2 -94084036480 a_{12}^4 a_{32}^3 + 233154747648 a_{12}^2 a_{32}^4\\
& -58883604480 a_{32}^5 + 9658653312 a_{12}^7 a_{32} a_{50} -65797494336 a_{12}^5 a_{32}^2 a_{50} + 14579296576 a_{12}^3 a_{32}^3 a_{50}\\
& +288547550208 a_{12} a_{32}^4 a_{50} + 2414663328 a_{12}^8 a_{50}^2 +28912166304 a_{12}^6 a_{32} a_{50}^2\\
& - 406316235904 a_{12}^4 a_{32}^2 a_{50}^2 +936674204416 a_{12}^2 a_{32}^3 a_{50}^2 - 127467869184 a_{32}^4 a_{50}^2\\
& +19144952176 a_{12}^7 a_{50}^3 - 109760863872 a_{12}^5 a_{32} a_{50}^3 -244778217136 a_{12}^3 a_{32}^2 a_{50}^3 \\
&+ 1036308069632 a_{12} a_{32}^3 a_{50}^3 +49040513184 a_{12}^6 a_{50}^4 - 522094417624 a_{12}^4 a_{32} a_{50}^4\\
& +985367488400 a_{12}^2 a_{32}^2 a_{50}^4 - 6731934720 a_{32}^3 a_{50}^4 +18296999196 a_{12}^5 a_{50}^5 \\
&- 545453749756 a_{12}^3 a_{32} a_{50}^5 +1124651917440 a_{12} a_{32}^2 a_{50}^5 - 114440868400 a_{12}^4 a_{50}^6\\
& +131607099120 a_{12}^2 a_{32} a_{50}^6 + 107989619520 a_{32}^2 a_{50}^6 -173404183515 a_{12}^3 a_{50}^7 \\
&+ 352746916240 a_{12} a_{32} a_{50}^7 -62864473110 a_{12}^2 a_{50}^8 + 39921858880 a_{32} a_{50}^8 +12556768140 a_{12} a_{50}^9).
\end{split}
\end{equation*}
Furthermore,
\begin{equation*}
\begin{array}{l}
\begin{split}
R_6=& \ {\rm Resultant} [f_5,J_2,a_{32}]\\
= & \ (49 a_{12}^3 + 196 a_{12}^2 a_{50} + 357 a_{12} a_{50}^2 +466 a_{50}^3)^2\times(225792 a_{12}^9 + 3214400 a_{12}^8 a_{50}\\
& +21345968 a_{12}^7 a_{50}^2 + 87227792 a_{12}^6 a_{50}^3 +246151320 a_{12}^5 a_{50}^4 + 509498952 a_{12}^4 a_{50}^5\\
& +785491407 a_{12}^3 a_{50}^6 + 864775852 a_{12}^2 a_{50}^7 +601885864 a_{12} a_{50}^8 + 196001002 a_{50}^9),\\
& \ {\rm Resultant} [R_6,R_3,a_{12}]\\
=& \ -110020282897692638814502001588125819626479649518678944188134258621907\\
&66734966784000000 a_{50}^{15}\neq 0.
\end{split}
\end{array}
\end{equation*}
So Theorem 2.2 in \cite{Liu-2010} yields the conclusion holds.
\end{proof}

\subsection{Case B: $n\geq2$.}

\noindent For this case, system \eqref{eq1.4} can be written as
\begin{equation}\label{eq4.11}
\begin{array}{l}
\begin{split}
\dot{x}=& \ y,\\
\dot{y}=& -2x^3+(a_{21}x^2y+a_{12}xy^2+a_{03}y^3+a_{2n+3,0}x^{2n+3}\\
&+a_{2n+2,1}x^{2n+2}y+a_{2n+1,2}x^{2n+1}y^2+\cdots+a_{1,2n+2}xy^{2n+2}\\
&+a_{0,2n+3}y^{2n+3})\\
\equiv & \ Y_1(x,y).
\end{split}
\end{array}
\end{equation}

\begin{theorem}\label{T4.3}
For $n\geq2$, the origin of system \eqref{eq4.11} is at most a $(n\!+\!4)$th-order
weak focus. If the origin of system \eqref{eq4.11} is a $(n\!+\!4)$th-order
weak focus,  then within a small neighborhood of the origin of its perturbed system,,
perturbing the coefficients of system \eqref{eq4.11} can yield
$n+4$ small-amplitude limit cycles enclosing the elementary node $O(0, 0)$.
\end{theorem}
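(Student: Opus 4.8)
The plan is to run the argument of the proof of Theorem \ref{T3.4}, with $x$ and $y$ in their new roles: build a reflection-symmetric comparison system whose origin is a center, compare it with \eqref{eq4.11} through the theory of rotated vector fields, and read off the focus quantities from the comparison determinant. First I would split $Y_1(x,y)$ according to its parity in $y$, writing $Y_1 = Y_2 + \widetilde Y$, where $Y_2$ collects the monomials even in $y$,
\[
Y_2(x,y) = -2x^3 + a_{12}xy^2 + a_{2n+3,0}x^{2n+3} + a_{2n+1,2}x^{2n+1}y^2 + \cdots + a_{1,2n+2}xy^{2n+2},
\]
and $\widetilde Y$ collects those odd in $y$, so that $\widetilde Y = y\,h(x,y)$ with
\[
h(x,y) = a_{21}x^2 + a_{03}y^2 + a_{2n+2,1}x^{2n+2} + a_{2n,3}x^{2n}y^2 + a_{2n-2,5}x^{2n-2}y^4 + \cdots + a_{0,2n+3}y^{2n+2}.
\]
The comparison system $\dot x = y,\ \dot y = Y_2(x,y)$ is invariant under $(x,y,t)\mapsto(x,-y,-t)$, hence time-reversible with respect to the $x$-axis; since its origin is a cubic-order monodromic singular point whose principal part $\ddot x = -2x^3$ already has a center, the origin is a center of the comparison system.

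Next I would form the determinant comparing the two vector fields,
\[
J_2 = \det\begin{pmatrix} y & Y_1(x,y)\\ y & Y_2(x,y)\end{pmatrix} = -\,y\,\widetilde Y(x,y) = -\,y^2\,h(x,y),
\]
and, treating $y$ and $x^2$ as infinitesimally equivalent near the origin as in \cite{Liu-2010} and in the proof of Theorem \ref{T3.4}, reduce it to
\[
J_2 = -\,x^4\bigl(a_{21}x^2 + a_{03}x^4 + a_{2n+2,1}x^{2n+2} + a_{2n,3}x^{2n+4} + a_{2n-2,5}x^{2n+6} + \cdots + a_{0,2n+3}x^{4n+4}\bigr).
\]
For $n\ge 2$ the exponents $2,\,4,\,2n+2,\,2n+4,\,\dots,\,4n+4$ occurring here are pairwise distinct, so the $n+4$ coefficients $a_{21},\,a_{03},\,a_{2n+2,1},\,a_{2n,3},\,\dots,\,a_{0,2n+3}$ can be taken as the successive focus quantities of \eqref{eq4.11}; since the $\dot x$-equation contributes no further parameters, the origin is at most an $(n\!+\!4)$th-order weak focus. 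When the origin is a weak focus of exact order $n+4$, the Jacobian of $(\lambda_1,\dots,\lambda_{n+3})$ with respect to these $n+4$ parameters is nonsingular at the critical point, so Theorem 4.1.5 in \cite{LiuJ-2009} applies and a suitable small perturbation of the coefficients of \eqref{eq4.11} yields $n+4$ small-amplitude limit cycles enclosing the elementary node $O(0,0)$.

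The bookkeeping of the parities in $y$ and the verification that the comparison system is time-reversible are routine. The delicate point, exactly as in the proof of Theorem \ref{T3.4}, is that the relation ``$y\sim x^2$'' is only heuristic, and its rigorous content --- that the $n+4$ coefficients above are genuinely independent generators of the ideal of quasi-Lyapunov constants, equivalently that the relevant Jacobian does not degenerate --- must be supplied by the quasi-homogeneous reduction of \cite{Liu-2010} together with the constructive scheme of Theorems \ref{T2.1}--\ref{T2.2}. The restriction $n\ge 2$ enters precisely here, through the distinctness of the exponents above: for $n=1$ the exponent $2n+2=4$ collides with the exponent of $a_{03}$, which is why Case A had to be treated separately by explicit resultant computations. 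I expect that for $n\ge 2$ this independence follows uniformly from the triangular structure of the reduced $J_2$, so that no explicit resultant computation will be required.
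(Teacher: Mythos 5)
Your proposal is correct and follows essentially the same route as the paper: you form the reversible comparison system from the part of $Y_1$ even in $y$, take the determinant of the two vector fields, reduce it via the equivalence $y\sim x^2$ to read off the $n+4$ coefficients $a_{21}, a_{03}, a_{2n+2,1},\dots, a_{0,2n+3}$ as focus quantities, and invoke the perturbation theorem of Liu--Li. Your write-up is in fact slightly cleaner than the paper's (which has minor typos in the exponents of the reduced determinant and in the form of the comparison system), and your remark on where the restriction $n\ge 2$ enters matches the paper's implicit reasoning.
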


\begin{proof}
The proof is similar to that for Theorem \ref{T3.4}. We construct a comparison system for  system \eqref{eq4.1},
\begin{equation}
\label{eq4.12}
\begin{array}{l}
\begin{split}
\dot{x}&=y,\\
\dot{y}&=-2x^3+x(a_{12}xy^2+a_{2n+3,0}x^{2n+3}+\cdots+a_{2,2n+1}x^2y^{2n+1})\\
&\equiv Y_2(x,y).
\end{split}
\end{array}\end{equation}
It is easy to see that system \eqref{eq4.12} is symmetric with the $x$-axis,
and so $O(0,0)$ is a center.

Next, we compute the determinant of system \eqref{eq4.12}, yielding
$$
\begin{array}{rl}
J_3= \!\!\! & \det \left[\begin{array}{cccc}
y&Y_1(x,y)\\
y&Y_2(x,y)
\end{array}\right] \\[3.0ex]
= \!\!\! & a_{21}x^2y^2+a_{03}y^4+a_{2n+2,1}x^{2n+2}y^2+a_{2n,3}x^{2n}y^4\\
&+\cdots+a_{2,2n+1}x^2y^{2n+2}+a_{0,2n+3}y^{2n+4}.
\end{array}
$$
Similarly, we take the $y$ and $x^2$ as infinitesimal equivalence  in the
neighborhood of the origin in order
 to study the dynamical behavior of \eqref{eq4.11}
around the origin. So, $J_2$ becomes
\begin{equation}\begin{split}
J_3=&x^4(a_{21}x^2+a_{03}x^4+a_{2n+2,1}x^{2n+2}+a_{2n,3}x^{2n+2}\\
&+\cdots+a_{2,2n+1}x^{4n+2}+a_{0,2n+3}x^{4n+4}),
\end{split}\end{equation}
implying that $a_{21},\,a_{03},\,a_{2n+2,1},\,a_{2n,3},\,\cdots,\,
a_{2,2n+1},\, a_{0,2n+3}$ could be considered as the focal values of the
system. Therefore, for $n\geq2$, the origin of system \eqref{eq4.11} is at most a
$(n\!+\!4)$th-order weak focus. According to Theorem 2.2 in \cite{Liu-2010},
within a small neighborhood of the origin,
one can perturb the coefficients of system \eqref{eq4.11} to obtain
$n+4$ small-amplitude limit cycles around the elementary node $O(0, 0)$.
\end{proof}

Moreover, we have a similar theorem for this case.
\begin{theorem}\label{T4.4}
For $n\geq2$, the origin of system \eqref{eq4.11} is an analytic center
if and only if
\begin{equation}
a_{21}=a_{03}=a_{2n+2,1}=a_{2n,3}=\cdots=a_{2,2n+1}=a_{0,2n+3}=0.
\end{equation}
\end{theorem}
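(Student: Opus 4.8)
The plan is to follow the same two-part strategy used in the proof of Theorem \ref{T3.5}, treating the sufficiency and necessity separately. For sufficiency, assume that $a_{21}=a_{03}=a_{2n+2,1}=a_{2n,3}=\cdots=a_{2,2n+1}=a_{0,2n+3}=0$. Substituting these into system \eqref{eq4.11} kills every monomial in $\dot y$ whose $x$-exponent is even, so the remaining nonlinearity consists only of terms $a_{kj}x^ky^j$ with $k$ odd (together with the leading $-2x^3$). Consequently $\dot x=y$ is odd in $x$ and $\dot y$ is odd in $x$, which means the resulting system is invariant under the reflection $(x,y,t)\mapsto(-x,y,-t)$; that is, it is symmetric with respect to the $y$-axis. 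Hence the origin is a center, and by Theorem 11 in \cite{Li-2013a} — invoked exactly as in the proofs of Proposition \ref{P3.1} and Theorem \ref{T3.5} — it is in fact an analytic center.

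For necessity, I would argue from the weak-focus analysis already carried out in the proof of Theorem \ref{T4.3}. There the comparison system \eqref{eq4.12} and the determinant $J_3$ show that, after treating $y$ and $x^2$ as infinitesimally equivalent, the quantities $a_{21},\,a_{03},\,a_{2n+2,1},\,a_{2n,3},\,\cdots,\,a_{2,2n+1},\,a_{0,2n+3}$ play the role of the focal values of system \eqref{eq4.11}. If the origin is an analytic center then by Theorem \ref{T2.3} it is a center of $\infty$-class, so every quasi-Lyapunov constant — in particular every one of these focal values — must vanish. This yields precisely the asserted list of vanishing conditions, closing the "only if" direction. I would state this as the converse computation: the origin being a center for any natural number $s$ forces $a_{21}=a_{03}=a_{2n+2,1}=a_{2n,3}=\cdots=a_{2,2n+1}=a_{0,2n+3}=0$, and then sufficiency (the $y$-axis symmetry argument above) shows these conditions are also sufficient.

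The main obstacle, and the point requiring the most care, is making the identification of the $a$-coefficients with genuine quasi-Lyapunov constants rigorous rather than heuristic. Unlike Case 1 ($n=1$), where the $\lambda_k$ were computed explicitly in Theorem \ref{T4.1}, here one only has the comparison-system/determinant argument: one must be sure that the "infinitesimal equivalence" reduction faithfully records which focal values can be independently made nonzero, and that no hidden relations among the $\omega_m$ (of the type seen in the $n=1$ case, where $\omega_{18},\omega_{20},\omega_{22}$ carried extra structure) survive to make some of these coefficients redundant. Since for $n\ge 2$ the relevant monomials $x^{2n+3},x^{2n+1}y^2,\dots$ first appear only at order $2n+3$ and are separated in degree from the cubic part, the successive construction in Theorem \ref{T2.1} does decouple them cleanly, so the identification is legitimate; but this is exactly the step I would write out carefully, mirroring the final paragraph of the proof of Theorem \ref{T4.3} and citing Theorem 2.2 in \cite{Liu-2010} and Theorem \ref{T2.3} as needed.
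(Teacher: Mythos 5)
Your proposal matches the paper's own argument: the sufficiency direction is handled identically, by noting that the stated conditions kill exactly the even-in-$x$ terms of $\dot y$ so that the reduced system \eqref{eq4.16} is symmetric with respect to the $y$-axis, and then invoking Theorem 11 of \cite{Li-2013a}. The paper's proof in fact records only this direction, leaving necessity implicit in the focal-value identification from Theorem \ref{T4.3}; that is precisely the argument (including the same heuristic ``infinitesimal equivalence'' step, whose non-rigorous character you rightly flag) that you supply for the ``only if'' part.
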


\begin{proof}
When $a_{21}=a_{03}=a_{2n+2,1}=a_{2n,3}=\cdots=a_{2,2n+1}=a_{0,2n+3}=0,$ system \eqref{eq4.11} can be rewritten as
\begin{equation}\label{eq4.16}
\begin{array}{l}
\begin{split}
\dot{x}=& \ y,\\
\dot{y}=& -2x^3+a_{12}xy^2+a_{2n+3,0}x^{2n+3}+a_{2n+1,2}x^{2n+1}y^2+\cdots+a_{1,2n+2}xy^{2n+2},\\
\end{split}
\end{array}
\end{equation}
Obviously, system \eqref{eq4.16} is symmetric with the $y$-axis.
According to Theorem 11 in \cite{Li-2013a}, the origin is an analytic
center of system \eqref{eq4.11}.
\end{proof}

\section{Conclusion}
\noindent In this paper, two classes of lopsided systems have been studied
on their analytic integrable conditions and bifurcation of limit cycles.
We have obtained some analytic integrability conditions for each class of
the systems for case $n=1$.
By using certain transformations or integrating factors,
we have proved that all conditions are sufficient and necessary.
For case $n \ge 2$, we have constructed different comparison systems for each class of
the systems and shown that $n+4$ limit cycles may bifurcate
from the origin of each system.
In addition, conditions for the origin being an analytic center
are obtained simultaneously.

\section{Appendix}

Detailed recursive MATHEMATICA code to compute the quasi-Lyapunov constants at the origin of system (13):
c [0,0]=0, c [1, 0]=0, c [0, 1]=0, c [2, 0]=0, c [1, 1]=0, c [0, 2]=1; when k<0 or j<0, c [k,j]=0; else
\begin{equation*}
\begin{split}
c[k,j]&=-\frac{1}{k (1 + s)}(-10 a_{50} c[-5 + k, 1 + j] + a_{50} k c[-5 + k, 1 + j] -5 a_{50} s c[-5 + k, 1 + j]\\
& + a_{50} k s c[-5 + k, 1 + j] -8 a_{41} c[-4 + k, j] + a_{41} k c[-4 + k, j] - 4 a_{41} s c[-4 + k, j]\\
 &+a_{41} k s c[-4 + k, j] - 4 c[-4 + k, 2 + j] - 2 j c[-4 + k, 2 + j] -4 s c[-4 + k, 2 + j]\\
 & - 2 j s c[-4 + k, 2 + j] -6 a_{32} c[-3 + k, -1 + j] + a_{32} k c[-3 + k, -1 + j]\\
 & -3 a_{32} s c[-3 + k, -1 + j]+ a_{32} k s c[-3 + k, -1 + j]+ a_{30} k s c[-3 + k, 1 + j]\\
 &  -6 a_{30} c[-3 + k, 1 + j] + a_{30} k c[-3 + k, 1 + j] -3 a_{30} s c[-3 + k, 1 + j]\\
 &  -4 a_{23} c[-2 + k, -2 + j] + a_{23} k c[-2 + k, -2 + j] -2 a_{23} s c[-2 + k, -2 + j]\\
 & + a_{23} k s c[-2 + k, -2 + j] -4 a_{21} c[-2 + k, j] + a_{21} k c[-2 + k, j] - 2 a_{21} s c[-2 + k, j]\\
 & +a_{21} k s c[-2 + k, j] - 2 a_{14} c[-1 + k, -3 + j] +a_{14} k c[-1 + k, -3 + j]\\
 & - a_{14} s c[-1 + k, -3 + j]+a_{14} k s c[-1 + k, -3 + j] - 2 a_{12} c[-1 + k, -1 + j]\\
 &  +a_{12} k c[-1 + k, -1 + j] - a_{12} s c[-1 + k, -1 + j]+a_{12} k s c[-1 + k, -1 + j] \\
 & + a_{05} k c[k, -4 + j] +a_{05} k s c[k, -4 + j] + a_{03} k c[k, -2 + j] + a_{03} k s c[k, -2 + j]).\\
 \end{split}
\end{equation*}
\begin{equation*}
\begin{split}
w[m]&=9 a_{50} c[-4 + m, 0] - a_{50} m c[-4 + m, 0] + 4 a_{50} s c[-4 + m, 0] -a_{50} m s c[-4 + m, 0]\\
& + 7 a_{41} c[-3 + m, -1]- a_{41} m c[-3 + m, -1] +3 a_{41} s c[-3 + m, -1]\\
 & - a_{41} m s c[-3 + m, -1] + 2 c[-3 + m, 1] +2 s c[-3 + m, 1]+ 5 a_{32} c[-2 + m, -2]\\
 & - a_{32} m c[-2 + m, -2] +2 a_{32} s c[-2 + m, -2] - a_{32} m s c[-2 + m, -2] \\
 &+ 5 a_{30} c[-2 + m, 0] -a_{30} m c[-2 + m, 0] + 2 a_{30} s c[-2 + m, 0] - a_{30} m s c[-2 + m, 0] \\
 &+3 a_{23} c[-1 + m, -3] - a_{23} m c[-1 + m, -3] + a_{23} s c[-1 + m, -3] -a_{23} m s c[-1 + m, -3] \\
 &+ 3 a_{21} c[-1 + m, -1] - a_{21} m c[-1 + m, -1] +a_{21} s c[-1 + m, -1] - a_{21} m s c[-1 + m, -1] \\
 &+ a_{14} c[m, -4] -a_{14} m c[m, -4] - a_{14} m s c[m, -4] + a_{12} c[m, -2] - a_{12} m c[m, -2] \\
 &-a_{12} m s c[m, -2] - a_{05} c[1 + m, -5] - a_{05} m c[1 + m, -5] -a_{05} s c[1 + m, -5]\\
 & - a_{05} m s c[1 + m, -5]- a_{03} c[1 + m, -3] -a_{03} m c[1 + m, -3] - a_{03} s c[1 + m, -3]\\
 & - a_{03} m s c[1 + m, -3] -c[1 + m, -1] - m c[1 + m, -1] - s c[1 + m, -1] - m s c[1 + m, -1].
\end{split}
\end{equation*}

Detailed recursive MATHEMATICA code to compute the quasi-Lyapunov constants at the origin of system (24):
c [0,0]=0, c [1, 0]=0, c [0, 1]=0, c [2, 0]=0, c [1, 1]=0, c [0, 2]=1; when k<0 or j<0, c [k,j]=0; else
\begin{equation*}
\begin{split}
c[k,j]&=-\frac{1}{k (1 + s)} (2 a_{50} c[-6 + k, 2 + j] + a_{50} j c[-6 + k, 2 + j] +2 a_{50} s c[-6 + k, 2 + j]\\
& + a_{50} j s c[-6 + k, 2 + j]+a_{41} j c[-5 + k, 1 + j] + a_{41} s c[-5 + k, 1 + j] \\
&+ a_{41} j s c[-5 + k, 1 + j]- 2 a_{32} c[-4 + k, j] +a_{32} j c[-4 + k, j]+ a_{32} j s c[-4 + k, j]\\
&  - 4 c[-4 + k, 2 + j] -2 j c[-4 + k, 2 + j]- 4 s c[-4 + k, 2 + j]-2 j s c[-4 + k, 2 + j]\\
&  - 4 a_{23} c[-3 + k, -1 + j]+a_{23} j c[-3 + k, -1 + j]- a_{23} s c[-3 + k, -1 + j] \\
&  +a_{23} j s c[-3 + k, -1 + j] + a_{21} j c[-3 + k, 1 + j] +a_{21} s c[-3 + k, 1 + j]\\
&+ a_{21} j s c[-3 + k, 1 + j] -6 a_{14} c[-2 + k, -2 + j] + a_{14} j c[-2 + k, -2 + j] \\
&-2 a_{14} s c[-2 + k, -2 + j]+ a_{14} j s c[-2 + k, -2 + j] -2 a_{12} c[-2 + k, j] + a_{12} j c[-2 + k, j] \\
&+ a_{12} j s c[-2 + k, j]-8 a_{05} c[-1 + k, -3 + j] + a_{05} j c[-1 + k, -3 + j]\\
&  -3 a_{05} s c[-1 + k, -3 + j]+ a_{05} j s c[-1 + k, -3 + j]-4 a_{03} c[-1 + k, -1 + j]\\
&  + a_{03} j c[-1 + k, -1 + j]-a_{03} s c[-1 + k, -1 + j] + a_{03} j s c[-1 + k, -1 + j]).\\
\end{split}\end{equation*}
\begin{equation*}
\begin{split}
w[m]&=-a_{50} c[-5 + m, 1] - a_{50} s c[-5 + m, 1] + a_{41} c[-4 + m, 0] +3 a_{32} c[-3 + m, -1]\\
& + a_{32} s c[-3 + m, -1]+ 2 c[-3 + m, 1] +2 s c[-3 + m, 1] + 5 a_{23} c[-2 + m, -2] \\
&+ 2 a_{23} s c[-2 + m, -2] +a_{21} c[-2 + m, 0]+ 7 a_{14} c[-1 + m, -3] + 3 a_{14} s c[-1 + m, -3]\\
&+3 a_{12} c[-1 + m, -1] + a_{12} s c[-1 + m, -1] + 9 a_{05} c[m, -4]+4 a_{05} s c[m, -4] \\
& + 5 a_{03} c[m, -2] + 2 a_{03} s c[m, -2] - c[1 + m, -1] -m c[1 + m, -1]\\
& - s c[1 + m, -1] - m s c[1 + m, -1].
\end{split}
\end{equation*}

\end{document}